\newenvironment{proof}{{\noindent \bf Proof:}}{\hfill$\Box$\medskip}
\newtheorem{theorem}{Theorem}[section]
\newtheorem{lemma}{Lemma}[section]
\newtheorem{definition}{Definition}[section]
\newtheorem{assumption}{Assumption}
\newtheorem{remark}{Remark}[section]
\definecolor{lred}{rgb}{1,0.8,0.8}
\definecolor{lblue}{rgb}{0.8,0.8,1}
\definecolor{dred}{rgb}{0.6,0,0}
\definecolor{dblue}{rgb}{0,0,0.5}
\definecolor{dgreen}{rgb}{0,0.5,0.5}
\begin{document}

\title{A Superlinear Convergence Framework for Kurdyka-{\L}ojasiewicz Optimization}


\author{Yitian Qian\footnote{(mayttqian@mail.scut.edu.cn) School of Mathematics, South China University of Technology.}\ \ {\rm and}\ \ Shaohua Pan\footnote{Corresponding author\,(shhpan@scut.edu.cn), School of Mathematics, South China University of Technology, Guangzhou.}}

\maketitle

\begin{abstract}
 This work extends the iterative framework proposed by Attouch et al. (in Math. Program. 137: 91-129, 2013) for minimizing a nonconvex and nonsmooth function $\Phi$ so that the generated sequence  possesses a Q-superlinear convergence rate. This framework consists of a monotone decrease condition, a relative error condition and a continuity condition, and the first two conditions both involve a parameter $p\!>0$. We justify that any sequence conforming to this framework is globally convergent when $\Phi$ is a Kurdyka-{\L}ojasiewicz (KL) function, and the convergence has a Q-superlinear rate of order $\frac{p}{\theta(1+p)}$ when $\Phi$ is a KL function of exponent $\theta\in(0,\frac{p}{p+1})$. Then, we illustrate that the iterate sequence generated by a $q\in[2,3]$-order regularized proximal Newton method for composite optimization problems with a nonconvex and nonsmooth term belongs to this framework, and consequently, first achieve the Q-superlinear convergence rate of order $4/3$ for a  cubic regularization method to solve this class of composite problems with KL property of exponent $1/2$. 
\end{abstract}

\noindent
{\bf Keywords:}\ KL optimization; superlinear convergence rate; cubic regularization method

\medskip
\noindent
{\bf AMS:} 90C26; 47N10; 65K05

\section{Introduction}\label{sec1}

Let $\mathbb{X}$ denote a finite dimensional real vector space with the inner product $\langle\cdot,\cdot\rangle$ and its induced norm $\|\cdot\|$. Consider the nonconvex and nonsmooth problem 
\begin{equation}\label{prob0}
	\min_{x\in\mathbb{X}}\Phi(x),
\end{equation}
where $\Phi\!:\mathbb{X}\to\overline{\mathbb{R}}\!:=(-\infty,\infty]$ is a proper lower semicontinuous (lsc) and lower bounded function. 
We are interested in the following iterative framework: 
\begin{description}
 \item [{\bf H1.}] for each $k\in\mathbb{N}$, $\Phi(x^{k+1})+a\|x^{k+1}-x^k\|^{p+1}\le
	\Phi(x^k)$;
	
\item [{\bf H2.}] for each $k\in\mathbb{N}$, $\exists w^{k+1}\in\partial\Phi(x^{k+1})$ such that
	$\|w^{k+1}\|\le b\|x^{k+1}\!-\!x^{k}\|^{p}$;
	
\item[{\bf H3.}] there exists a convergent subsequence $\{x^{k_j}\}_{j\in\mathbb{N}}$ with limit $\widetilde{x}$ such that $\limsup_{j\to\infty}\Phi(x^{k_j})\le\Phi(\widetilde{x})$,
\end{description}
where $a>0, b\ge0$ and $p>0$ are the constants. Condition H1 restricts the monotone decrease magnitude of the objective values, condition H2 controls the relative error of $x^{k+1}$ to be a critical point of $\Phi$, i.e. a stationary point of \eqref{prob0}, and condition H3 is rather weak, which along with the lower semicontinuity of $\Phi$ implies that $\lim_{j\to\infty}\Phi(x^j)=\Phi(\widetilde{x})$. When $p=1$, the above iterative framework reduces to the popular one proposed by Attouch et al. \cite{Attouch13}. 
\subsection{Main Motivations}\label{sec1.1}

 For the sequences satisfying conditions H1-H3 for $p=1$, Attouch et al. \cite{Attouch13} proved its global convergence for the KL function $\Phi$ and its R-linear convergence rate for the KL function $\Phi$ with exponent $\theta\in(0,1/2]$. From \cite[Section 5]{Attouch13}, if the proximal gradient (PG) method with a monotone line-search is applied to the zero-norm regularized logistic regression, i.e., problem \eqref{prob0} with 
\begin{equation}\label{znorm-LG}
 \Phi(x)=\frac{1}{m}\sum_{i=1}^m\log\big[1+\exp(-b_i(a_i^{\mathbb{T}}x))\big]+\frac{\mu}{2}\|x\|^2+\lambda\|x\|_0\quad{\rm for}\ x\in\mathbb{R}^n
\end{equation}
where $\lambda>0$ and $\mu>0$ are the regularization parameters, and $a_i\in\mathbb{R}^n$ and $b_i\in\mathbb{R}$ for $i=1,2,\ldots,m$ are the given data, the generated iterate sequence comply with the above framework for $p=1$, and Figure \ref{fig1} below demonstrates that this sequence actually possesses a superlinear convergence rate. Moreover, such $\Phi$ is a KL function of exponent $1/2$ by \cite[Example 4.3]{WuPanBi21}. We find that a similar local convergence behavior also occurs when $\Phi$ takes the zero-norm regularized least squares function, another KL function of exponent $1/2$ by \cite[Example 4.3]{WuPanBi21}. This means that there is still room to improve the convergence rate result in \cite{Attouch13} when $\Phi$ is a KL function of exponent $\theta\in(0,1/2]$. Although Ochs also achieved a local convergence result in \cite{Ochs18} for the sequences satisying conditions H1-H3 with $p=1$, he did not give an improved local convergence rate. Inspired by this, we focus on the convergence analysis for the iterate sequence complying with conditions H1-H3 in this work. 
 \begin{figure}[H]
  \centering
  \includegraphics[width=12.0cm,height=5.0cm]{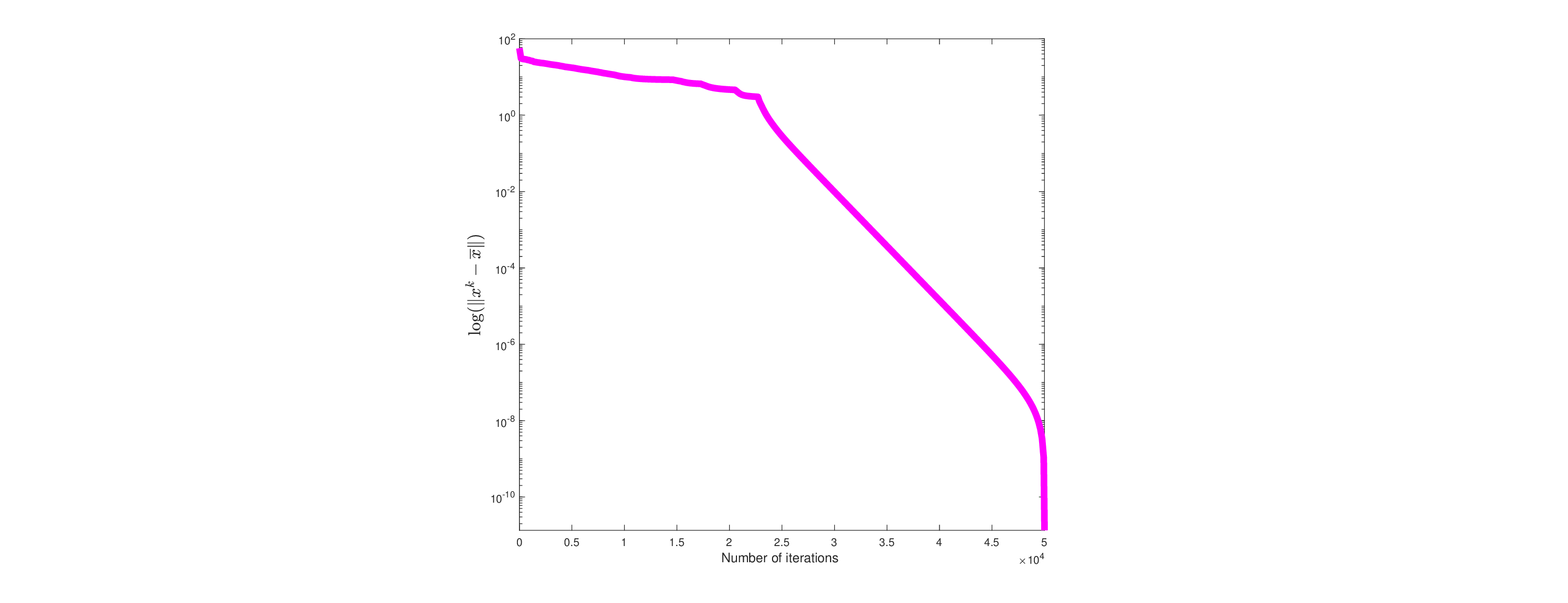}
  \caption{\small Convergence behavior of the iterates yielded by a PG method for minimizing $\Phi$ in \eqref{znorm-LG} with $\mu=10^{-5}$ and $\lambda=0.1$, where all $a_i\in\mathbb{R}^n$ and $b_i\in\mathbb{R}$ are generated randomly.}
 	\label{fig1}
 \end{figure}
 Another motivation is to establish the global convergence and Q-superlinear convergence rate of the iterate sequence generated by the cubic regularization (CR) method for the nonconvex and nonsmooth composite problem
\begin{equation}\label{composite}
	\min_{x\in\mathbb{X}}F(x):=f(x)+g(x),
\end{equation}
where $f,g\!:\mathbb{X}\to\overline{\mathbb{R}}$ are proper lsc functions and satisfy the basic assumption:
\begin{assumption}\label{ass-F} 
 \begin{description}	
 \item[(i)] $f$ is twice differentiable on an open set containing  ${\rm dom}g$; 
 
 \item[(ii)] the function $g$ is lower bounded, and is continuous relative to its domain. 	
 \end{description}
\end{assumption}
 The idea of cubic regularization first appeared in \cite{Griewank81} for solving problem \eqref{composite} with $g\equiv 0$, where Griewank proved that any accumulation point of the generated iterate sequence is a second-order critical point of $f$. Most of later works on the CR method were  carried out for this class of smooth optimization problems (see, e.g., \cite{Nesterov06,1Cartis11,2Cartis11,Yue19,Zhou18}). For example, Nesterov and Polyak \cite{Nesterov06} achieved the Q-superlinear convergence rate of the objective value sequence with order $4/3$ for gradient-dominated functions (a little stronger than KL functions of exponent $1/2$) and order $3/2$ for star-convex functions having a global nondegenerate optimal set (essentially star-convex functions with quadratic growth) respectively in Theorems 7 and 5 of that work, and obtained the global convergence and local Q-quadratic convergence rate of the iterate sequence under the local strong convexity of $f$ (see \cite[Theorem 3]{Nesterov06}); Cartis et al. \cite{1Cartis11} proposed an adaptive CR method by using a dynamic regularization parameter and solving the subproblems inexactly, and established the global convergence and local Q-superlinear convergence rate of the iterate sequence under the local strong convexity of $f$ (see \cite[Theorem 4.5 \& Corollary 4.8]{1Cartis11}); Yue et al. \cite{Yue19} proved that the iterate sequence generated by the CR method converges Q-quadratically to a second-order critical point under a local error bound condition, which does not require the isolatedness of stationary points and is much weaker than the nondegeneracy condition in \cite[Theorem 4.1]{Griewank81} and \cite[Theorem 3]{Nesterov06}; and Zhou et al. \cite{Zhou18} characterized the Q-superlinear convergence rate of the objective value sequence and the R-superlinear convergence rate of the iterate sequence for the KL function $f$ of exponent $(0,{2}/{3})$. To the best of our knowledge, there is no work to explore the global convergence and local convergence rate of the iterate sequence generated by the CR method for composite problems with nonconvex and nonsmooth terms, though some focus on the global iteration complexity for nonsmooth convex composite problems \cite{Nesterov08,Grapiglia19,Jiang20}. 

  Recently, several tensor methods (of course covering the CR method) were proposed for solving problem \eqref{composite} with a convex $g$ (see \cite{Nesterov22,Doikov22}) and its more general formulation (see \cite{Nabou23,Necoara21}). Among others, Doikov and Nesterov \cite{Doikov22} considered a regularized composite tensor method of degree $p\ge 2$, which becomes the CR method for $p=2$, and established the local Q-superlinear convergence rate of order $\frac{p}{q-1}$ for the objective value sequence and the norm of minimal subgradient sequence under the uniform convexity of degree $q\ge 2$ on $f$; Nesterov \cite{Nesterov22} investigated an inexact basic tensor method that uses approximate solutions of the auxiliary problems for nonsmooth convex composite problems, and analyzed its iteration complexity on the objective values; Necoara and Lupu \cite{Necoara21} proposed a general higher-order majorization-minimization algorithm framework for solving \eqref{prob0} by $p\,(\ge1)$ higher-order surrogate of $\Phi$ at each iterate, and obtained the superlinear convergence rate of the objective value sequence under the KL property of $\Phi$ with exponent $\theta\in(0,1/(p\!+\!1))$ (see \cite[Theorem 4.6]{Necoara21}); and Nabou and Necoara \cite{Nabou23} developed a general composite higher-order algorithmic framework for solving \eqref{composite} with $f=\varphi(F(\cdot))$ based on $p\,(\ge1)$ higher-order surrogates of $f$, and when higher-order composite surrogates are sufficiently smooth, they provided the global asymptotic stationary point guarantees and achieved the local R-linear convergence rate of objective value sequence for the KL objective function with exponent $\theta\in(0,p/(p\!+\!1)]$ (see \cite[Theorem 3]{Nabou23}). As far as we know, there are no local convergence rate results for the iterate sequences of the above tensor methods. 
\subsection{Our Contributions}\label{sec1.2}

 In this work, we investigate the global convergence and the local convergence rate for a sequence $\{x^k\}_{k\in\mathbb{N}}$ satisfying conditions H1-H3, and propose a $q\in[2,3]$-order 
 regularized proximal Newton method for problem \eqref{composite} to demonstrate that such a sequence $\{x^k\}_{k\in\mathbb{N}}$ is accessible. The main contributions are as follows.
 \begin{itemize}
  \item For the KL function $\Phi$, the sequence $\{x^k\}_{k\in\mathbb{N}}$ is proved to converge to a (limiting) critical point of $\Phi$, and the convergence is shown to have Q-superlinear rate of order $\frac{p}{\theta (p+1)}$ if $\Phi$ is a KL function of exponent  $\theta\in(0,\frac{p}{p+1})$ and the R-linear rate if $\Phi$ is a KL function of exponent $\theta=\frac{p}{p+1}$. This is the first local Q-superlinear convergence result for the iterate sequences of nonconvex and nonsmooth KL optimization problems, which implies that the sequences from the iterative framework of \cite{Attouch13} possess local Q-superlinear convergence rate of order $\frac{1}{2\theta}$ when $\Phi$ is a KL function of exponent $\theta\in(0,1/2)$, and  improves the convergence rate results there. As a byproduct, we obtain local Q-superlinear rate of order $\frac{p}{\theta (p+1)}$ for the sequence $\{\Phi(x^k)\}_{k\in\mathbb{N}}$ if $\Phi$ is a KL function of exponent  $\theta\in(0,\frac{p}{p+1})$. 
	
  \item  A $q\in[2,3]$-order regularized proximal Newton method is proposed for solving the nonconvex and nonsmooth composite problem \eqref{composite}, and the generated iterate sequence is proved to satisfy conditions H1-H3 with $p\!=q\!-\!1$. This method seeks in each step a stationary point of a $q$-order polynomial regularized by $g$, which is implementable in practice at least when the proximal mapping of $g$ has a closed form. When $F$ is a KL function of exponent $\theta\in(0,{(q\!-\!1)}/{q})$, the generated iterate sequence has Q-superlinear convergence rate with order ${(q\!-\!1)}/{(\theta q)}$. For $q=3$, we achieve local Q-superlinear convergence rate with order ${2}/{(3\theta)}$ for the iterate and objective value sequences yielded by the proposed CR method if $F$ is a KL function of exponent $\theta\in(0,{2}/{3})$, say, the zero-norm regularized least squares or logsitic regression, thereby extending the results in \cite{Zhou18} for smooth optimization problems to problem \eqref{composite}.
	
  \item As will be discussed in Remark \ref{remark-rate} (c), the iterate sequences generated by the tensor methods in \cite{Necoara21,Nabou23} comply with conditions H1-H3, so Theorem \ref{KL-rate} is applicable to them and improves the local convergence results there.  
\end{itemize}

\section{Preliminaries}\label{sec2}

 We recall the basic subdifferential of a function  $h\!:\mathbb{X}\to\overline{\mathbb{R}}$ at a point $x\in{\rm dom}h$, and refer the reader to standard texts such as \cite{RW98} and \cite{Mordu06} for details. 
\begin{definition}\label{Gsubdiff-def}
 (see \cite[Definition 8.3]{RW98}) Consider a function $h\!:\mathbb{X}\to\overline{\mathbb{R}}$ and a point $x\in{\rm dom}h$. The regular subdifferential of $h$ at $x$ is defined as
 \[
	\widehat{\partial}h(x):=\bigg\{v\in\mathbb{X}\,|\,
	\liminf_{x\ne x'\to x}\frac{h(x')-h(x)-\langle v,x'-x\rangle}{\|x'-x\|}\ge 0\bigg\},
 \]
 and its basic (known as the limiting or Morduhovich) subdifferential at $x$ is 
 \[
	\partial h(x):=\bigg\{v\in\mathbb{X}\,|\, \exists\,x^k\to x\ {\rm with}\ h(x^k)\to h(x)\ {\rm and}\
	v^k\in\widehat{\partial}h(x^k)\ {\rm with}\ v^k\to v\bigg\}.
 \]
\end{definition}

 We call $\overline{x}\in\mathbb{X}$ a critical point of a proper function $h\!:\mathbb{X}\to\overline{\mathbb{R}}$ if $0\in\partial h(x)$. 
To introduce the KL function of exponent $\theta\in[0,1)$, for any $\eta>0$, we denote by $\Upsilon_{\!\eta}$ the set of all continuous concave $\varphi\!:[0,\eta)\to\mathbb{R}_{+}$ that are continuously differentiable on $(0,\eta)$ with $\varphi(0)=0$ and $\varphi'(s)>0$ for all $s\in(0,\eta)$.
\begin{definition}\label{KL-Def}(see \cite{Attouch10})
 A proper function $h\!:\mathbb{X}\!\to\overline{\mathbb{R}}$ is said to have the KL property at $\overline{x}\in{\rm dom}\,\partial h$ if there exist $\eta\in(0,\infty]$, a neighborhood $\mathcal{U}$ of $\overline{x}$ and a function $\varphi\in\Upsilon_{\!\eta}$ such that for all $x\in\mathcal{U}\cap\big[h(\overline{x})<h<h(\overline{x})+\eta\big]$,
 \begin{equation}\label{KL-ineq}
 \varphi'(h(x)-h(\overline{x})){\rm dist}(0,\partial h(x))\ge 1.
 \end{equation}
  If $\varphi$ can be chosen as $\varphi(s)=cs^{1-\theta}$ with $\theta\in[0,1)$ for some $c>0$, then $h$ is said to have the KL property of exponent $\theta$ at $\overline{x}$. If $h$ has the KL property (of exponent $\theta$) at every point of ${\rm dom}\,\partial h$, then it is called a KL function (of exponent $\theta$).
\end{definition}
\begin{remark}\label{KL-remark}
 To verify that a proper lsc function is a KL function of exponent $\theta$, it suffices to prove that it has the KL property of exponent $\theta$ at all critical points because by \cite[Lemma 2.1]{Attouch10} it has this property at all noncritical points. On the calculation of the KL exponent, we refer the reader to \cite{LiPong18,YuLiPong21,WuPanBi21}.
\end{remark}
\section{Convergence Analysis}\label{sec3}
  
 By condition H1 and the lower boundedness of $\Phi$, we have the following result.
\begin{lemma}\label{lemma1-Phi}
 Let $\{x^k\}_{k\in\mathbb{N}}$ be a sequence satisfying condition H1. Then, the sequence $\{\Phi(x^{k})\}$ is nonincreasing and convergent, and ${\displaystyle\lim_{k\to\infty}}\|x^{k}-x^{k-1}\|=0$.
 \end{lemma}
\subsection{Global Convergence}\label{sec3.1}

 We are ready to justify the global convergence for any sequence $\{x^k\}_{k\in\mathbb{N}}$ 
satisfying conditions H1-H3 under the KL property of $\Phi$. Its proof is inspired by the arguments for \cite[Lemma 2.6 \& Theorem 2.9]{Attouch13}, we include it for completeness.
\begin{theorem}\label{KL-converge}
 Let $\{x^k\}_{k\in\mathbb{N}}$ be a sequence satisfying conditions H1-H3. If $\Phi$ is a KL function, then $\sum_{k=0}^\infty\|x^{k+1}-x^k\|<\infty$.
 \end{theorem}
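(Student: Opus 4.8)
The plan is to follow the now-classical KL convergence argument of Attouch et al., adapted to the parameter $p > 0$ appearing in H1–H2. First I would invoke Lemma~\ref{lemma1-Phi}: under H1 and \eqref{Phi-cond1}--\eqref{Phi-cond2}, the sequence $\{\Phi(x^k)\}$ is nonincreasing and converges to some value $\omega^*$, the iterates are bounded with nonempty compact cluster point set $\varpi(x^0)$, and $\Phi \equiv \omega^*$ on $\varpi(x^0)$. We may assume $\Phi(x^k) > \omega^*$ for all $k$, since otherwise H1 forces $x^{k+1} = x^k$ eventually and the series is trivially finite.

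Next I would use the uniformized KL inequality (the standard compact-set version of the KL property, e.g.\ as in \cite{Attouch13}): since $\varpi(x^0)$ is compact, $\Phi$ is KL at each of its points, and $\Phi$ is constant on it, there exist $\eta > 0$, $\varepsilon > 0$ and $\varphi \in \Upsilon_{\!\eta}$ such that $\varphi'(\Phi(x) - \omega^*)\,{\rm dist}(0,\partial\Phi(x)) \ge 1$ for all $x$ in an $\varepsilon$-neighborhood of $\varpi(x^0)$ with $\omega^* < \Phi(x) < \omega^* + \eta$. For $k$ large enough, $x^k$ lies in this region. The core estimate is then to combine three ingredients at such a $k$: (i) the concavity of $\varphi$, which gives $\varphi(\Phi(x^k)-\omega^*) - \varphi(\Phi(x^{k+1})-\omega^*) \ge \varphi'(\Phi(x^k)-\omega^*)(\Phi(x^k)-\Phi(x^{k+1}))$; (ii) the decrease H1, $\Phi(x^k) - \Phi(x^{k+1}) \ge a\|x^{k+1}-x^k\|^{p+1}$; and (iii) the relative error H2, ${\rm dist}(0,\partial\Phi(x^k)) \le \|w^k\| \le b\|x^k - x^{k-1}\|^p$, hence $\varphi'(\Phi(x^k)-\omega^*) \ge 1/(b\|x^k-x^{k-1}\|^p)$. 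Writing $\Delta_k := \varphi(\Phi(x^k)-\omega^*) - \varphi(\Phi(x^{k+1})-\omega^*)$, these combine to
\begin{equation}\label{key-est}
 \|x^{k+1}-x^k\|^{p+1} \le \frac{b}{a}\,\Delta_k\,\|x^k - x^{k-1}\|^{p}.
\end{equation}

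From \eqref{key-est} the summability follows by a Young-type inequality tailored to the exponent $p+1$. Taking $(p\!+\!1)$-th roots and applying the weighted AM–GM inequality $u^{\frac{1}{p+1}} v^{\frac{p}{p+1}} \le \frac{1}{p+1}u + \frac{p}{p+1}v$ with $u = \frac{b}{a}(p\!+\!1)^{p+1}\,\Delta_k$ and $v$ a suitable constant multiple of $\|x^k - x^{k-1}\|$ (the constant chosen so the $v$-term has coefficient a bit less than one after absorbing the $\frac{p}{p+1}$ factor, e.g.\ $v = \rho\|x^k-x^{k-1}\|$ with $\rho$ fixed so that $\frac{p}{p+1}\rho < 1$), one obtains
\[
 \|x^{k+1}-x^k\| \le \tfrac{1}{p+1}\cdot\tfrac{b}{a}(p\!+\!1)^{p+1}\rho^{-p}\,\Delta_k + \tfrac{p}{p+1}\rho\,\|x^k-x^{k-1}\|.
\]
Summing over $k$ from some large $K$ to $N$, the telescoping of $\sum_k \Delta_k \le \varphi(\Phi(x^K)-\omega^*) < \infty$ and the coefficient $\frac{p}{p+1}\rho < 1$ on the shifted term let us absorb $\sum \|x^k-x^{k-1}\|$ from the right into the left, yielding a uniform bound on $\sum_{k=K}^{N}\|x^{k+1}-x^k\|$ independent of $N$; adding the finitely many initial terms gives $\sum_{k=0}^\infty \|x^{k+1}-x^k\| < \infty$.

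The main obstacle I anticipate is purely bookkeeping rather than conceptual: making sure the "enter the KL region and stay there" step is airtight. One must check that for all sufficiently large $k$ the iterate $x^k$ is simultaneously within $\varepsilon$ of $\varpi(x^0)$ (true since ${\rm dist}(x^k,\varpi(x^0)) \to 0$ by Lemma~\ref{lemma1-Phi}(ii) and \eqref{Phi-cond1}) \emph{and} satisfies $\omega^* < \Phi(x^k) < \omega^* + \eta$ (true since $\Phi(x^k) \downarrow \omega^*$ and, under our reduction, stays strictly above $\omega^*$) — and, crucially, that H2 at index $k$ refers to $\|x^k - x^{k-1}\|$, so the chain needs $x^{k-1}$ near $\varpi(x^0)$ as well, which costs only a shift of the starting index. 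The rest — choosing $\rho$, tracking constants $a,b,p$ through the Young inequality — is routine. I would also note in passing that $b = 0$ makes H2 force $0 \in \partial\Phi(x^{k+1})$, i.e.\ $x^{k+1}$ is already critical, trivializing the claim, so we may assume $b > 0$.
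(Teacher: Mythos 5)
Your proposal is correct and follows essentially the same route as the paper's own proof: the uniformized KL inequality on the compact cluster set, the combination of concavity of $\varphi$ with H1 and H2 (with the index shift in H2 handled correctly) to get $\|x^{k+1}-x^k\|^{p+1}\le \frac{b}{a}\Gamma_{k,k+1}\|x^k-x^{k-1}\|^p$, and then a Young/AM--GM splitting so that the shifted term can be absorbed after summing and telescoping. The only difference is cosmetic bookkeeping (your weight $\rho$ versus the paper's fixed coefficient $\frac{1}{2}$ via $u^{\frac{p}{p+1}}v^{\frac{1}{p+1}}\le u+v$).
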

 \begin{proof} Let $\overline{x}=\widetilde{x}$, the limit of the subsequence $\{x^{k_j}\}_{j\in\mathbb{N}}$ in H3 such that $\Phi(x^{k_j})\to\Phi(\widetilde{x})$ as $j\to\infty$, which is clearly a cluster point of $\{x^k\}_{k\in\mathbb{N}}$. Together with Lemma \ref{lemma1-Phi}, we deduce that $\Phi(x^k)\to \Phi(\overline{x})$ as $k\to\infty$ and $\Phi(x^k)\ge\Phi(\overline{x})$ for all $k\in\mathbb{N}$. By the given assumption, $\Phi$ has the KL property at $\overline{x}$, so there exist a constant $\eta>0$, a neighborhood $\mathcal{U}$ of $\overline{x}$, and a function $\varphi\in\Upsilon_{\!\eta}$ as in Definition \ref{KL-Def} such that for all $x\in\mathcal{U}\cap[\Phi(\overline{x})<\Phi<\Phi(\overline{x})+\eta]$, 
 \begin{equation}\label{ineq1-KLPhi}
  \varphi'(\Phi(x)-\Phi(\overline{x})){\rm dist}(0,\partial \Phi(x))\ge 1.
 \end{equation}
 Let $\delta>0$ be such that $\mathbb{B}(\overline{x},\delta)\subset\mathcal{U}$ and choose $\rho\in(0,\delta)$. If necessary, we shrink $\eta$ so that $\eta<a(\delta-\rho)^{p+1}$. 
 Let $\Gamma_{k,k+1}:=\varphi\big(\Phi(x^{k})-\Phi(\overline{x})\big)
 -\varphi\big(\Phi(x^{k+1})-\Phi(\overline{x})\big)$.
 Fix any $k\in\mathbb{N}$. We claim that if $\Phi(x^k)<\Phi(\overline{x})+\eta$ and $x^k\in\mathbb{B}(\overline{x},\rho)$, then 
 \begin{align}\label{temp-ineq31}
  \|x^{k+1}-x^k\|
  &\le\|x^k-x^{k-1}\|^{\frac{p}{p+1}}\big(b{a}^{-1}\Gamma_{k,k+1}\big)^{\frac{1}{p+1}}\\
  \label{temp-ineq32}
  &\le\frac{1}{2}\|x^k-x^{k-1}\|+2^pba^{-1}\Gamma_{k,k+1}
 \end{align}
 where the second inequality holds because $u^{\frac{p}{p+1}}v^{\frac{1}{p+1}}\le u+v$ for all $u,v\ge0$, implied by Young's inequality. Indeed, if $x^{k+1}=x^k$, inequality \eqref{temp-ineq31} holds trivially, so we only need to consider that $x^{k+1}\ne x^k$. Now from H1 we have $\Phi(x^k)>\Phi(x^{k+1})\ge \Phi(\overline{x})$ which, along with inequality \eqref{ineq1-KLPhi} and condition H2, shows that $w^{k}\ne 0$ and $x^{k}\ne x^{k-1}$. Using inequality \eqref{ineq1-KLPhi} and H2 again leads to  
 \begin{equation}\label{gap-ineq}
  \varphi'\big(\Phi(x^{k})-\Phi(\overline{x})\big)\ge\frac{1}{b\|x^{k}-x^{k-1}\|^p}.
 \end{equation}
 In addition, by the definition of $\Gamma_{k,k+1}$ and the concavity of $\varphi$ on $[0,\eta)$, 
 \begin{equation*}
 \Gamma_{k,k+1} \ge\varphi'\big(\Phi(x^{k})\!-\!\Phi(\overline{x})\big)(\Phi(x^{k})\!-\!\Phi(x^{k+1})).
 \end{equation*}
 Combining the last two inequalities with condition H1 shows that \eqref{temp-ineq31} holds. 
 Since $x^{k_j}\to\widetilde{x}=\overline{x}$ as $j\to\infty$ and $\Phi(x^k)\to \Phi(\overline{x})$, by Lemma \ref{lemma1-Phi} there exists $\overline{k}\in\mathbb{N}$ such that $\Phi(x^k)\in[\Phi(\overline{x}),\Phi(\overline{x})+\eta)$ for all $k\ge \overline{k}$, which along with the continuity of $\varphi$ on $[0,\eta)$ yields that
 \begin{equation}\label{temp-ineq33}
  \|x^{\overline{k}}-\overline{x}\|+\|x^{\overline{k}}-x^{\overline{k}-1}\|
  +2\Big(\frac{\Phi(x^{\overline{k}})-\Phi(\overline{x})}{a}\Big)^{\frac{1}{p+1}}
  +\frac{2^{p+1}b}{a}\varphi\big(\Phi(x^{\overline{k}})-\Phi(\overline{x})\big)<\rho.
 \end{equation}
 Then, by using the above inequalities \eqref{temp-ineq32}-\eqref{temp-ineq33}, we can prove that for each $\nu>\overline{k}$, 
 \begin{align}\label{aim-ineq31}
  x^{\nu}\in\mathbb{B}(\overline{x},\rho),\qquad\qquad\qquad\qquad\qquad\\
  \label{aim-ineq32}
  \sum_{k=\overline{k}}^{\nu}\big\|x^{k+1}-x^{k}\big\|
  \le\sum_{k=\overline{k}}^{\nu}\frac{1}{2}\|x^k-x^{k-1}\|
  +\frac{2^pb}{a}\varphi\big(\Phi(x^{\overline{k}})\!-\!\Phi(\overline{x})\big).	
 \end{align}
 Indeed, when $\nu=\overline{k}+1$, from inequality \eqref{temp-ineq33} we have $x^{\overline{k}}\in\mathbb{B}(\overline{x},\rho)$, which along with condition H1 and $\Phi(x^{\overline{k}+1})>\Phi(\overline{x})$ implis that 
 \[
  \|x^{\overline{k}+1}-\overline{x}\|
  \le\|x^{\overline{k}+1}-x^{\overline{k}}\|+\|x^{\overline{k}}-\overline{x}\|
 \le  \|x^{\overline{k}}-\overline{x}\|+\Big(\frac{\Phi(x^{\overline{k}})-\Phi(\overline{x})}{a}\Big)^{\frac{1}{p+1}}<\rho.
 \]
 where the last inequality is due to \eqref{temp-ineq33}. This shows that \eqref{aim-ineq31} holds with $\nu=\overline{k}+1$. Summing \eqref{temp-ineq32} from $\overline{k}$ to $\overline{k}+1$ and using the nonnegativity of $\varphi$ yields that \eqref{aim-ineq32} holds with $\nu=\overline{k}+1$. Suppose that \eqref{aim-ineq31} and \eqref{aim-ineq32} hold some $\nu>\overline{k}$. Notice that inequality \eqref{aim-ineq32} implies  the following one
 \begin{equation}\label{temp-ineq3}
  \frac{1}{2}\sum_{k=\overline{k}}^{\nu}\big\|x^{k+1}-x^{k}\big\|
  \le\frac{1}{2}\|x^{\overline{k}}-x^{\overline{k}-1}\|
 +\frac{2^pb}{a}\varphi\big(\Phi(x^{\overline{k}})\!-\!\Phi(\overline{x})\big).
 \end{equation}
 Together with $\|x^{\nu+1}-\overline{x}\|\le \|x^{\overline{k}}-\overline{x}\|+\sum_{j=\overline{k}}^{\nu}\|x^{j+1}-x^j\|$ and \eqref{temp-ineq33}, we obtain
 \begin{align*}
  \|x^{\nu+1}-\overline{x}\|
 \le \|x^{\overline{k}}-\overline{x}\|+\|x^{\overline{k}}-x^{\overline{k}-1}\|+\frac{2^{p+1}b}{a}\varphi\big(\Phi(x^{\overline{k}})\!-\!\Phi(\overline{x})\big)<\rho.
 \end{align*}
 This shows that \eqref{aim-ineq31} holds for $\nu+1$. Summing inequality \eqref{temp-ineq32} from $\overline{k}$ to $\nu+1$ and using the nonnegativity of $\varphi$ yields that \eqref{aim-ineq32} holds for $\nu+1$. The above arguements show that inequalities \eqref{aim-ineq31}-\eqref{aim-ineq32} hold for $\nu>\overline{k}$. Note that inequality \eqref{aim-ineq32} implies \eqref{temp-ineq3}.
 Passing the limit $\nu\to\infty$ to the both sides of \eqref{temp-ineq3} yields the desired result. The proof is then completed. 
\end{proof}
\subsection{Local Convergence Rates}\label{sec3.2}

 We next establish the convergence rate of a sequence satisfying conditions H1-H3 under the assumption that $\Phi$ is a KL function of exponent $\theta\in(0,1)$. 
 \begin{theorem}\label{KL-rate}
  Let $\{x^k\}_{k\in\mathbb{N}}$ be a sequence complying with conditions H1-H3. 	Suppose that $\Phi$ is a KL function of exponent $\theta\in(0,1)$. Then, the sequence $\{x^k\}_{k\in\mathbb{N}}$ is convergent with limit $\overline{x}$, and furthermore,
  \begin{description}
   \item[(i)] when $\theta\in\!(0,\frac{p}{p+1})$, with any given $\varepsilon\in(0,1)$, for all sufficiently large $k$,
		\[
		\|x^{k+1}-\overline{x}\|\le\varepsilon\|x^{k}-\overline{x}\|^{\frac{p}{\theta(1+p)}};
		\]
		
   \item[(ii)] when $\theta\in[\frac{p}{p+1},1)$, there exist $\gamma>0$ and $\varrho\in(0,1)$ such that for all $k$ large enough,
		\begin{equation}\label{aim-ineq-rate}
			\|x^k-\overline{x}\|\le\sum_{j=k}^{\infty}\|x^{j+1}\!-\!x^{j}\|
			\le\left\{\begin{array}{cl}
				\gamma\varrho^{k} &{\rm if}\ \theta=\frac{p}{p+1},\\
				\gamma k^{\frac{1-\theta}{1-(1+1/p)\theta}}&{\rm if}\ \theta\in(\frac{p}{p+1},1).
			\end{array}\right.
		\end{equation}
 \end{description}
 \end{theorem}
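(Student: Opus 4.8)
Before the author's proof, here is how I would approach Theorem~\ref{KL-rate}.

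The first assertion -- convergence of $\{x^k\}_{k\in\mathbb{N}}$ to a limit $\widetilde x$ -- is immediate from Theorem~\ref{KL-converge}: summability of $\{\|x^{k+1}-x^k\|\}_{k\in\mathbb{N}}$ makes $\{x^k\}_{k\in\mathbb{N}}$ a Cauchy sequence, and $\widetilde x\in\varpi(x^0)$; moreover H2 forces $\mathrm{dist}(0,\partial\Phi(x^{k+1}))\le b\|x^{k+1}-x^k\|^{p}\to0$, so $0\in\partial\Phi(\widetilde x)$ by the closedness of $\partial\Phi$. Write $r_k:=\Phi(x^k)-\Phi(\widetilde x)\ge0$ and $s_k:=\|x^{k+1}-x^k\|$. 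If $r_{\overline k}=0$ for some $\overline k$, then H1 gives $x^k\equiv x^{\overline k}$ for all $k\ge\overline k$ and every claim is trivial, so I assume $r_k>0$ for all $k$.

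The engine of the proof is a single scalar recursion for $\{r_k\}$. Since $\Phi$ is a KL function of exponent $\theta$, I take $\varphi(s)=cs^{1-\theta}$ in the uniformized KL inequality of \cite[Lemma 6]{Bolte14} on a neighborhood of $\varpi(x^0)$; for $k$ large the point $x^k$ lies in that neighborhood with $\Phi(\widetilde x)<\Phi(x^k)<\Phi(\widetilde x)+\eta$, whence $\mathrm{dist}(0,\partial\Phi(x^k))\ge r_k^{\theta}/[c(1-\theta)]$. Combining this with H2 at $x^k$ (i.e. $\mathrm{dist}(0,\partial\Phi(x^k))\le b\,s_{k-1}^{p}$) and then H1 at step $k-1$ (i.e. $a\,s_{k-1}^{p+1}\le r_{k-1}-r_k$), and eliminating $s_{k-1}$, yields for all large $k$
\[
r_k^{\beta}\ \le\ C_0\,(r_{k-1}-r_k),\qquad \beta:=\tfrac{\theta(p+1)}{p}=(1+\tfrac1p)\theta,\quad C_0:=\big(bc(1-\theta)\big)^{(p+1)/p}/a,
\]
where $1/\beta=\tfrac{p}{\theta(1+p)}$ is exactly the exponent in the theorem, and $\beta<1,\ =1,\ >1$ according as $\theta<\tfrac{p}{p+1},\ =\tfrac{p}{p+1},\ >\tfrac{p}{p+1}$.

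From this I would read off the rate of $\{r_k\}$: (i) if $\beta<1$, then $r_k\le(C_0r_{k-1})^{1/\beta}$, so $\{r_k\}$ converges $Q$-superlinearly to $0$ of order $1/\beta$; (ii-a) if $\beta=1$, then $(1+C_0)r_k\le C_0r_{k-1}$, so $\{r_k\}$ is $R$-linearly convergent; (ii-b) if $\beta>1$, a standard comparison lemma for scalar sequences gives $r_k=O(k^{-1/(\beta-1)})$. To pass to $\{x^k\}$ I use $\|x^k-\widetilde x\|\le\Delta_k:=\sum_{j\ge k}s_j$, together with $s_j\le((r_j-r_{j+1})/a)^{1/(p+1)}$ and, alternatively, the estimate \eqref{temp-ineq3} already produced inside the proof of Theorem~\ref{KL-converge}, which re-indexed reads $\Delta_k\le s_{k-1}+\tfrac{2^{p+1}b}{a}\varphi(r_k)=s_{k-1}+C_1\,r_k^{1-\theta}$. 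In case (ii-a) both $s_{k-1}$ and $r_k^{1-\theta}$ are geometrically small (indeed $1-\theta=\tfrac1{p+1}$ there), giving $\Delta_k\le\gamma\varrho^k$; in case (ii-b) the $r_k^{1-\theta}$ term dominates and the identity $-\tfrac{1-\theta}{\beta-1}=\tfrac{1-\theta}{1-(1+1/p)\theta}$ produces the second bound of \eqref{aim-ineq-rate}. In case (i) the super-fast decay makes $s_k=o(s_{k-1})$ and hence $\Delta_k=(1+o(1))s_k$; summing $s_j\le(r_j/a)^{1/(p+1)}$ and then using $r_k^{\theta}\le bc(1-\theta)s_{k-1}^{p}$ gives $\Delta_k\le(1+o(1))C_2\,s_{k-1}^{\,p/(\theta(1+p))}$, and since $s_{k-1}\le\|x^{k-1}-\widetilde x\|+\|x^k-\widetilde x\|$ with $\|x^k-\widetilde x\|\le\Delta_k=o(s_{k-1})$, one gets $s_{k-1}\le(1+o(1))\|x^{k-1}-\widetilde x\|$ and hence $\|x^k-\widetilde x\|\le(1+o(1))C_2\|x^{k-1}-\widetilde x\|^{\,p/(\theta(1+p))}$, from which part (i) follows for $k$ large.

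The step I expect to be the main obstacle is precisely this last transfer in case (i): the scalar recursion controls the function values $r_k$, and converting that into the \emph{sharp} order-$\tfrac{p}{\theta(1+p)}$ statement for the iterates -- with the multiplicative factor shrunk to an arbitrary $\varepsilon\in(0,1)$ rather than left as a fixed constant -- hinges on the two-sided comparison $\|x^{k-1}-\widetilde x\|\asymp s_{k-1}\asymp r_{k-1}^{1/(p+1)}$ for large $k$, which itself rests on the tail $\Delta_k$ being asymptotically its leading term; keeping careful track of the vanishing factors so that the exponent comes out exactly $\tfrac{p}{\theta(1+p)}$ (and not $\tfrac{p}{\theta(1+p)}-\epsilon$) is the delicate part. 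The remaining ingredients -- the uniformized KL inequality, the degenerate eventually-constant case, and the classical scalar lemma for $\beta>1$ -- are routine.
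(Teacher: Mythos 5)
Your route differs from the paper's mainly in bookkeeping: you drive everything through the function values, deriving the scalar recursion $r_k^{\theta(p+1)/p}\le C_0(r_{k-1}-r_k)$ for $r_k:=\Phi(x^k)-\Phi(\widetilde{x})$ (this is precisely what the paper records separately in Remark \ref{remark-rate}(b)) and then transferring to the iterates via $\|x^k-\widetilde{x}\|\le\Delta_k$, $s_j:=\|x^{j+1}-x^j\|\le (r_j/a)^{1/(p+1)}$ and the re-indexed form of \eqref{temp-ineq3}, whereas the paper works directly with the step-length recursion \eqref{temp-ineq4} in case (i) and with recursions for $\Delta_k$ itself in case (ii). For part (ii) your transfer does deliver the rates in \eqref{aim-ineq-rate}: when $\theta=\frac{p}{p+1}$ both $s_{k-1}$ and $r_k^{1-\theta}$ are geometric, and when $\theta\in(\frac{p}{p+1},1)$ the term $r_k^{1-\theta}$ decays like $k^{\frac{1-\theta}{1-(1+1/p)\theta}}$ and dominates $s_{k-1}$ because $1-\theta<\frac{1}{p+1}$ in that regime; this is equivalent to the paper's Subcases 2.1--2.2. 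The preliminary convergence claim is handled exactly as in the paper.

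The genuine gap is in part (i), and it is the one you flagged yourself but did not close. Your chain $\|x^{k}-\widetilde{x}\|\le\Delta_{k}\le(1+o(1))s_{k}\le(1+o(1))M\,s_{k-1}^{\beta}\le(1+o(1))M\|x^{k-1}-\widetilde{x}\|^{\beta}$, with $\beta=\frac{p}{\theta(p+1)}$ and $M=(\frac{bc(1-\theta)}{a^{\theta}})^{\frac{1}{\theta(p+1)}}$, proves Q-superlinear convergence of order $\beta$ with a \emph{fixed} multiplicative constant, whereas statement (i) asserts the bound with an \emph{arbitrary} $\varepsilon\in(0,1)$, i.e.\ that the quotient $\|x^{k+1}-\widetilde{x}\|/\|x^{k}-\widetilde{x}\|^{\beta}$ tends to $0$; a fixed-constant bound cannot be converted into the $\varepsilon$-form by absorbing the constant into the small quantity without lowering the exponent, so your outline, as written, proves a strictly weaker statement. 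The paper's device for extracting $\varepsilon$ is different from anything in your sketch: it iterates \eqref{temp-ineq4} to get $\|x^{j+1}-x^{j}\|\le M\epsilon_1^{\beta^{j-k+1}-\beta}\|x^{k}-x^{k-1}\|^{\beta}$ for all $j\ge k$, where $\epsilon_1\ge M^{1/(\beta-1)}\|x^{k}-x^{k-1}\|$ may be taken arbitrarily small for large $k$, and then sums only the tail $j\ge k+1$, all of whose coefficients carry strictly positive powers of $\epsilon_1$; the arbitrarily small prefactor comes from that tail sum. Be aware that even in the paper this is the delicate spot: in the summed display the base of the power shifts from $\|x^{k}-x^{k-1}\|$ to $\|x^{k+1}-x^{k}\|$, and with base $x^{k+1}$ the leading tail coefficient is $\epsilon_1^{\beta-\beta}=1$, so if you rework case (i) you must either keep the base $\|x^{k}-x^{k-1}\|$ and then handle the resulting two-index absorption with care, or be content with the fixed-constant Q-superlinear rate of order $\frac{p}{\theta(p+1)}$, which is what your argument honestly establishes.
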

\begin{proof} 
 For each $k$, write $\Delta_k\!:=\!\sum_{j=k}^{\infty}\|x^{j+1}\!-\!x^{j}\|$. By Theorem \ref{KL-converge}, $\Delta_k<\infty$ and the sequence $\{x^k\}_{k\in\mathbb{N}}$ is convergent, so it suffices to prove that parts (i) and (ii) hold. If there exists $\mathbb{N}\ni\widetilde{k}\ge\overline{k}$ such that $\Phi(x^{\widetilde{k}})=\Phi(\overline{x})$, where $\overline{k}\in\mathbb{N}$ is the same as in the proof of Theorem \ref{KL-converge}, then condition H1 implies that $x^{\widetilde{k}+1}=x^{\widetilde{k}}$. By induction, we have $x^k=x^{\widetilde{k}}$ for all $k\ge\widetilde{k}$, and the result follows. Thus, by Lemma \ref{lemma1-Phi}, it suffices to consider that $\Phi(x^{k})>\Phi(\overline{x})$ for all $k\in\mathbb{N}$. Note that \eqref{gap-ineq} holds with $\varphi(t)=ct^{1-\theta}\ (c>0)$ for $t\ge 0$, i.e., 
 \begin{equation}\label{ineq-Phi}
  (\Phi(x^{k})\!-\!\Phi(\overline{x}))^{\theta}\le bc(1-\theta)\|x^k-x^{k-1}\|^p
  \quad{\rm for\ all}\ k\ge\widetilde{k}.
 \end{equation}
 Note that $\Phi(x^{k})-\Phi(x^{k+1})\le\Phi(x^{k})-\Phi(\overline{x})$ for each $k\in\mathbb{N}$. Together with condition H1 and inequality \eqref{ineq-Phi}, for all $k\ge\widetilde{k}$, it holds that
  \begin{align}\label{temp-ineq4}
   \|x^{k+1}-x^k\|\le \Big(\frac{bc(1-\theta)}{a^{\theta}}\Big)^{\frac{1}{\theta (p+1)}}\|x^k-x^{k-1}\|^{\frac{p}{\theta(1+p)}}.
  \end{align}
  We proceed the arguments by the two cases: $\theta\in(0,\frac{p}{p+1})$ and $\theta\in[\frac{p}{p+1},1)$.
	
  \noindent
 {\bf Case 1: $\theta\in(0,\frac{p}{p+1})$.} 
 Let $M\!:=\!(\frac{bc(1-\theta)}{a^{\theta}})^{\frac{1}{\theta(p+1)}}$ and $\beta\!:=\!\frac{p}{\theta(1+p)}$. From the recursion relation in \eqref{temp-ineq4}, for any $j\ge k\ge\widetilde{k}$, 
  \begin{align*}
  \|x^{j+1}-x^{j}\|
  &\le M^{\sum_{t=0}^{j-k}\beta^t}\|x^k-x^{k-1}\|^{\beta^{j-k+1}}
		=M^{\frac{\beta^{j-k+1}-1}{\beta-1}}\|x^k-x^{k-1}\|^{\beta^{j-k+1}}\\
  &=M\|x^{k}-x^{k-1}\|^{\beta}\big(M^\frac{1}{\beta-1}\|x^{k}-x^{k-1}\|\big)^{\beta^{j-k+1}-\beta}.
 \end{align*}
 Since $\lim_{k\to\infty}\|x^{k}\!-\!x^{k-1}\|=0$ by Lemma \ref{lemma1-Phi}, there exists a sufficiently small $\epsilon_1\in(0,1/2)$ such that   $M^\frac{1}{\beta-1}\|x^{k}\!-\!x^{k-1}\|\le\!\epsilon_1$ for all $k\ge\widetilde{k}$ (if necessary by increasing $\widetilde{k}$). Along with the last inequality, for any $j\ge k\ge\widetilde{k}$, it holds that 
 $\|x^{j+1}-x^{j}\|\le M\epsilon_1^{\beta^{j-k+1}-\beta}\|x^{k}-x^{k-1}\|^{\beta}$. Summing this inequality from $j=k\ge\widetilde{k}$ to any $\nu\ge k$ and passing the limit $\nu\to\infty$ yields that 
 \[
   \|x^{k+1}-\overline{x}\|\le\sum_{j=k+1}^{\infty}\|x^{j+1}-x^j\|
   \le M\|x^{k+1}\!-\!x^{k}\|^{\beta}\sum_{j=k+1}^{\infty}\epsilon_1^{\beta^{j-k+1}-\beta}.
 \] 
 Since $\beta>1$, if necessary by shrinking $\epsilon_1$,  	$\sum_{j=2}^{\infty}\epsilon_1^{\beta^{j}-\beta^2}\le\sum_{j=2}^{\infty}0.5^{\beta^{j}-\beta^2}$ and $\epsilon_1^{\beta^2-\beta}\sum_{j=2}^{\infty}0.5^{\beta^{j}-\beta^2}\le\frac{\varepsilon}{2^{\beta+1}M}$, and then $\sum_{j=2}^{\infty}\epsilon_1^{\beta^{j}-\beta}=\epsilon_1^{\beta^2-\beta}\sum_{j=2}^{\infty}\epsilon_1^{\beta^{j}-\beta^2}\le \frac{\varepsilon}{2^{\beta+1}M}$. Together with the last inequality, it follows that
 \begin{align*}
  &\|x^{k+1}-\overline{x}\|\le M\|x^{k+1}-x^{k}\|^{\beta}
		\sum_{j=2}^{\infty}\epsilon_1^{\beta^{j}-\beta}\le 2^{-(\beta+1)}\varepsilon\|x^{k+1}-x^k\|^{\beta}\\
  &\le 2^{-(\beta+1)}\varepsilon\big(\|x^{k+1}-\overline{x}\|+\|x^k-\overline{x}\|\big)^{\beta}
  \le 0.5\varepsilon\big(\|x^{k+1}-\overline{x}\|^{\beta}+\|x^k-\overline{x}\|^{\beta}\big).
 \end{align*}  
 Note that $\lim_{k\to\infty}\|x^{k+1}-\overline{x}\|^{\beta-1}=0$. If necessary by increasing $k$, we assume that $0.5\varepsilon\|x^{k+1}-\overline{x}\|^{\beta-1}<\frac{1}{2}$. From the last inequality, it immediately follows that $0.5\|x^{k+1}-\overline{x}\|\le 0.5\varepsilon\|x^k-\overline{x}\|^{\beta}$, and the desired inequality follows.
	
 \noindent
 {\bf Case 2: $\theta\in[\frac{p}{p+1},1)$.} By the definition of $\Delta_k$ and the triangle inequality, $\|x^k-\overline{x}\|\le\Delta_k$, so we only need to establish the second inequality. 
	
 \noindent
 {\bf Subcase 2.1: $\theta=\frac{p}{p+1}$.} From \eqref{temp-ineq31}, 	$\|x^{k+1}-x^k\|\le\|x^k-x^{k-1}\|^{\frac{p}{p+1}}\big(\frac{b}{a}\Gamma_{k,k+1}\big)^{\frac{1}{p+1}}$ for all $k\ge\widetilde{k}$. By this recursion relation, if there exists $\widetilde{k}_1\ge\widetilde{k}$ such that $\|x^{\widetilde{k}_1}-x^{\widetilde{k}_1-1}\|=0$, then $\|x^k-x^{k-1}\|=0$ for all $k\ge\widetilde{k}_1$, and the conclusion then follows. Hence, it suffices to consider that $\|x^k-x^{k-1}\|>0$ for all $k\ge\widetilde{k}$. 
 Now summing \eqref{temp-ineq32} from any $k\ge\widetilde{k}$ to any $\nu>k$ yields that 
 \begin{align*}
 \frac{1}{2}\sum_{j=k}^{\nu}\big\|x^{j+1}-x^{j}\big\|
  &\le \frac{1}{2}\|x^k-x^{k-1}\|
		+\frac{2^pb}{a}\varphi\big(\Phi(x^{k})\!-\!\Phi(\overline{x})\big)\nonumber\\
  &=\frac{1}{2}\|x^k-x^{k-1}\|
		+\frac{2^pb}{a}c\big(\Phi(x^{k})\!-\!\Phi(\overline{x})\big)^{1-\theta}\nonumber\\
  &\le \frac{1}{2}\|x^k-x^{k-1}\|+\frac{2^pb}{a}c\big[bc(1-\theta)\big]^{\frac{1-\theta}{\theta}}
		\big\|x^k-x^{k-1}\big\|^{\frac{p(1-\theta)}{\theta}}\\
  &=\Big[\frac{1}{2}+2^pba^{-1}c[bc(1-\theta)]^{\frac{1-\theta}{\theta}}\Big]\|x^k-x^{k-1}\|\nonumber
 \end{align*}
 where the second inequality is due to \eqref{ineq-Phi}, and the second equality is using $\theta=\frac{p}{p+1}$. Write $M_1:=2^pba^{-1}c[bc(1-\theta)]^{\frac{1-\theta}{\theta}}$. Passing the limit $\nu\to\infty$ to the last inequality gives $\frac{1}{2}\Delta_k\le \big(\frac{1}{2}\!+M_2\big)(\Delta_{k-1}\!-\Delta_{k})$ and then $\Delta_{k}\le \frac{1+2M_2}{2(1+M_2)}\Delta_{k-1}$. By invoking this recursion, we obtain that  $\Delta_{k}\le\big(\frac{1+2M_2}{2(1+M_2)}\big)^{k-\widetilde{k}}\Delta_{\widetilde{k}}$. Thus, there exists $\gamma>0$ such that $\Delta_{k}\le\gamma\varrho^k$ with 
	$\varrho=\frac{1+2M_2}{2(1+M_2)}\in(0,1)$. 
	
   \noindent
   {\bf Subcase 2.2: $\theta\in(\frac{p}{p+1},1)$.} From the proof of subcase 2.1, for any $\nu>k\ge\widetilde{k}$, 
   \begin{align*}
	\frac{1}{2}\sum_{j=k}^{\nu}\big\|x^{j+1}-x^{j}\big\|
	&\le \frac{1}{2}\|x^k-x^{k-1}\|+\frac{2^pb}{a}c\big[bc(1-\theta)\big]^{\frac{1-\theta}{\theta}}
		\big\|x^k-x^{k-1}\big\|^{\frac{p(1-\theta)}{\theta}}\\
	&\le\Big[\frac{1}{2}+2^pba^{-1}c[bc(1-\theta)]^{\frac{1-\theta}{\theta}}\Big]\|x^k-x^{k-1}\|^{\frac{p(1-\theta)}{\theta}}\\
	&\le\Big[\frac{1}{2}+2^pba^{-1}c[bc(1-\theta)]^{\frac{1-\theta}{\theta}}\Big]\big(\Delta_{k-1}-\Delta_k\big)^{\frac{p(1-\theta)}{\theta}}
	\end{align*}
	where the second inequality is due to $\frac{p(1-\theta)}{\theta}<1$ and $\|x^k-x^{k-1}\|<1$ for $k\ge\widetilde{k}$. Passing the limit $\nu\to\infty$ to the last inequality, for any $k\ge\widetilde{k}$, we have
	\[
	\Delta_{k}^{\frac{\theta}{p(1-\theta)}}
	\le M_2^{\frac{\theta}{p(1-\theta)}}\big(\Delta_{k-1}-\Delta_{k}\big)
	\ \ {\rm with}\ M_2:=1+2^{p+1}ba^{-1}c(bc(1-\theta))^{\frac{1-\theta}{\theta}}.
	\]
	By using this resursion formula and following the same analysis technique as
	in \cite[Page 14]{Attouch09}, there exist $\widehat{k}\ge\widetilde{k}$
	and $\widetilde{\gamma}>0$ such that for all $k\ge\widehat{k}$,
	\[
	\Delta_{k}^{\mu}-\Delta_{k-1}^{\mu}\ge\widetilde{\gamma}>0\ \ {\rm with}\ \
	\mu=\frac{1-(1+p^{-1})\theta}{1-\theta}.
	\]
	Summing this inequality from $\widehat{k}$ to some $\nu\ge\widehat{k}$ yields
	that $\Delta_{\nu}^{\mu}-\Delta_{\widetilde{k}}^{\mu}\ge
	\widetilde{\gamma}(\nu-\widetilde{k})$. Since $\mu<0$, 
	there exists $\widetilde{\gamma}_1>0$ such that
	\(
	\Delta_{\nu}\le\big(\Delta_{\widehat{k}}^{\mu}\!+\widetilde{\gamma}(\nu-\widetilde{k})\big)^{1/\mu}
	\le \widetilde{\gamma}_1(\nu-\widehat{k})^{\frac{1-\theta}{1-(1+1/p)\theta}}.
	\)
  Consequently, the desired conclusion holds. 
 \end{proof}
 \begin{remark}\label{remark-rate}
 {\bf(a)} From Theorem \ref{KL-rate}, we conclude that the sequences conforming to conditions H1-H3 with $p=1$ have a Q-superlinear convergence rate of order ${1}/{(2\theta)}$ if $\Phi$ is a KL function of exponent $\theta\in(0,{1}/{2})$. This improves greatly the convergence result obtained in \cite[Theorem 2]{Attouch09}.		
	
 \noindent
 {\bf(b)} By combining \eqref{ineq-Phi} with condition H1, it is easy to get that for all $k\ge\widetilde{k}$, 
 \[
	\Phi(x^k)-\Phi(\overline{x})\le\Big[\frac{bc(1-\theta)}{a^{\frac{p}{p+1}}}\Big]^{\frac{1}{\theta}}\big[\Phi(x^{k-1})-\Phi(x^k)\big]^{\frac{p}{\theta(p+1)}}.
 \]
 Note that $\Phi(x^{k-1})-\Phi(x^k)\le\Phi(x^{k-1})-\Phi(\overline{x})$ for all $k\ge\widetilde{k}$. The last inequality implies that the sequence $\{\Phi(x^k)\}_{k\in\mathbb{N}}$ converges to $\Phi(\overline{x})$ with a Q-superlinear rate of order $\frac{p}{\theta(p+1)}$ when $\Phi$ is a KL function of exponent $\theta\in(0,\frac{p}{p+1})$. For $\theta\in[\frac{p}{p+1},1)$, by letting $\Delta_k:=\Phi(x^k)-\Phi(\overline{x})$, the last inequality is rewritten as 
 \[
 \Delta_k\le M_3(\Delta_{k-1}-\Delta_k)^{\frac{p}{\theta(p+1)}}\ \ {\rm with}\ \ M_3=\Big[\frac{bc(1-\theta)}{a^{\frac{p}{p+1}}}\Big]^{\frac{1}{\theta}}
 \ \ {\rm for\ all}\ k\ge\widetilde{k}.
 \]
 Using this recursion formula and following the same analysis technique as in \cite[Page 14]{Attouch09} shows that the sequence $\{\Phi(x^k)\}_{k\in\mathbb{N}}$ converges to $\Phi(\overline{x})$ with a Q-linear rate if $\Phi$ is a KL function of exponent $\theta=\frac{p}{p+1}$ and with a sublinear rate if $\Phi$ is a KL function of exponent $\theta\in(\frac{p}{p+1},1)$. 
 In addition, from condition H2 and inequality \eqref{temp-ineq4}, for all $k\ge\widetilde{k}$, with $C=\frac{bc(1-\theta)}{a^{\theta}}$ and $\beta=\frac{p}{\theta (p+1)}$, 
 \begin{align*}
 {\rm dist}(0,\partial\Phi(x^{k+1}))&\le b\|x^{k+1}-x^k\|^p
 \le bC^{\beta}\|x^k-x^{k-1}\|^{p\beta}\\
 &\le bC^{\sum_{j=1}^{k-\widetilde{k}+1}\beta^j}\|x^{\widetilde{k}}-x^{\widetilde{k}-1}\|^{p\beta^{k-\widetilde{k}+1}}\\
 &=bC^{\frac{1}{1-\beta}}\big[C^{\frac{1}{\beta-1}}\|x^{\widetilde{k}}-x^{\widetilde{k}-1}\|^{p}\big]^{\beta^{k-\widetilde{k}+1}}
 \le bC^{\frac{1}{1-\beta}}({1}/{2})^{\beta^{k-\widetilde{k}+1}}
 \end{align*}
 where the last inequality is due to  $C^{\frac{1}{\beta-1}}\|x^{\widetilde{k}}\!-\!x^{\widetilde{k}-1}\|^{p}\le 0.5$ (if necessary by increasing $\widetilde{k}$) implied by $\lim_{k\to\infty}\|x^{k}-x^{k-1}\|=0$. This implies that the sequence $\{{\rm dist}(0,\partial\Phi(x^{k}))\}_{k\in\mathbb{N}}$ converges to $0$ with a R-superlinear rate of order $r\in(1,\frac{p}{\theta(p+1)}]$ when $\Phi$ is a KL function of exponent $\theta\in(0,\frac{p}{p+1})$. 
	
 \noindent
 {\bf(c)} From the proofs of \cite[Theorems 4.3 $\&$ 4.4]{Necoara21}, the sequence  $\{x^k\}_{k\in\mathbb{N}}$ generated by GHOM, a general higher-order majorization-minimization algorithm, conforms to conditions H1-H3. Then, by invoking Theorem \ref{KL-rate}, the sequence generated by GHOM possesses local Q-superlinear rate of order $\frac{p}{\theta(p+1)}$ if the cost function $f$ in \cite[Equa (1.1)]{Necoara21} is a KL function of exponent $\theta\in(0,\frac{p}{p+1})$. In addition, from \cite[Theorem 1 $\&$ Equa (22)]{Nabou23}, the sequence $\{x^k\}_{k\in\mathbb{N}}$ generated by GCHO, a general composite higher-order algorithm, also satisfies conditions H1-H3, so has local Q-superlinear rate of order $\frac{p}{\theta(p+1)}$ by Theorem \ref{KL-rate} when the objective function $f$ there is a KL function of exponent $\theta\in(0,\frac{p}{p+1})$.
\end{remark}
\section{A $q$-order Regularized Proximal Newton Method}\label{sec4}
 
 As mentioned in the introduction, the past decade has witnessed active research on proximal Newton methods for composite convex optimization problems (see, e.g., \cite{Lee14,Yue191,Mordu22,Kanzow21}), to the best of our knowledge, there is no work to achieve the local superlinear convergence rate on proximal Newton methods for composite optimization with a nonconvex and nonsmooth term. In this section, we propose a $q\in[2,3]$-order regularized proximal Newton (RPNT) method for solving the nonconvex and nonsmooth composite problem \eqref{composite}, and prove that the generated iterate sequence comply to conditions H1-H3. 
 
 For each $k$, let $f_k\!:\mathbb{X}\to\mathbb{R}$ be the quadratic expansion of $f$ at $x^k\!\in{\rm dom}\,g$: 
 \begin{equation}\label{ellk}  f_k(x):=f(x^k)+\langle\nabla\!f(x^{k}),x\!-\!x^k\rangle+\frac{1}{2}\langle x\!-\!x^k,\nabla^2\!f(x^{k})(x\!-\!x^k)\rangle\ \ \forall x\in\mathbb{X}.
 \end{equation}  
 In each iterate, our $q\in[2,3]$-order RPNT method seeks a stationary point $y^k$ of the following subproblem with $F_k(y^k)\le F(x^k)$:
 \begin{equation}\label{subprobk}
 \min_{x\in\mathbb{X}}F_k(x):=f_k(x)+g(x)+(L_k/q)\|x-x^k\|^q,
 \end{equation}    
 where $L_k>0$ is a parameter, and its iterate steps are described as follows. For $q=3$, it reduces to a CR method, and for $q=2$ it becomes a RPNT method.
 \begin{algorithm}[h]
 \caption{\label{cubic-reg}{\bf\,($q$-order RPNT method)}}
  \textbf{Initialization:} Choose $\epsilon>0,\,q\in\![2,3],\,0<\!L_{\min}<\!L_{\max},\,\tau>1,\,\delta\in(0,1)$, and an initial point $x^0\in{\rm dom}\,g$. Set $k:=0$.\\
  \textbf{while} ${\rm dist}(0,\partial F(x^k))>\epsilon$ \textbf{do}
  \begin{enumerate}
  		\item  Choose $L_{k,0}\in[L_{\min},L_{\max}]$.
		
		\item  {\bf For} $j=0,1,2,\ldots$ {\bf do} 
		\begin{enumerate}
			\item Compute a stationary point $x^{k,j}$ with $\widetilde{F}_{k,j}(x^{k,j})\le \widetilde{F}_{k,j}(x^{k})$ for the subproblem 
			\begin{equation}\label{subprob1}
				\min_{x\in\mathbb{X}}\widetilde{F}_{k,j}(x):=f_k(x)+(L_{k,j}/q)\|x-x^{k}\|^q+g(x).
			\end{equation}
			
			\item If $F(x^{k,j})\le F(x^k)-(\delta/q)L_{k,j}\|x^{k,j}-x^k\|^q$, set $j_k=j$ and go to step 4. Otherwise, let $L_{k,j+1}=\tau^{j+1} L_{k,0}$. 
		\end{enumerate}
		\item  {\bf End (for)}    
		
		\item  Set $L_{k}=L_{k,j_k},x^{k+1}=x^{k,j_k}$ and $\widetilde{F}_k=\widetilde{F}_{k,j_k}$, and let $k\leftarrow k+1$.
  \end{enumerate}
 \textbf{End (while)}
 \end{algorithm}
\begin{remark}\label{Alg-remark}
 {\bf(a)} Note that $x$ is a stationary point of problem \eqref{composite} whenever $0\in\partial F(x)=\!\nabla\!f(x)+\partial g(x)$. This inspired us to adopt ${\rm dist}(0,\partial F(x^k))\le\epsilon$ as the stopping condition of Algorithm \ref{cubic-reg}, and the final output provides an approximate stationary point of this sense. Such a stopping condition can be checked without requiring the characterization of $\partial F(x^{k+1})$. Indeed, from the stationary condition of \eqref{subprob1}, the inclusion \eqref{aim-ineq2} in the proof of Lemma \ref{pro-alg1} holds, which means that such a stopping condition is satisfied at $x^{k+1}$ if $\|\nabla\!f(x^{k+1})\!-\!\nabla\!f(x^{k})\!-\!\nabla^2\!f(x^{k})(x^{k+1}-\!x^k)
 	-L_k\|x^{k+1}-x^k\|^{q-2}(x^{k+1}\!-\!x^k)\|\le\epsilon$.
 
 \noindent
 {\bf(b)} When $\nabla^2f$ is strictly continuous at $x^k$, the constant $L_{k,0}$ in step 1 is an initial estimation for the Lipschitz modulus of the Hessian of $f$ around $x^{k}$. A good initialization for it will reduce the cost of the inner for-end loop. Inspired by the technique to estimate the Lipschitz constant of $\nabla\!f$ in \cite{Wright09}, we suggest the Barzilai-Borwein (BB) rule \cite{Barzilai88} to capture a desirable $L_{k,0}$. 
 
 \noindent
 {\bf(c)} By the expression of $f_k$, when $L_{k,j}$ is sufficiently large, the function $\widetilde{F}_{k,j}$ is coercive by Assumption \ref{ass-F} (ii) and its global minimizer exists. Let $\overline{x}^{k,j}$ be a global minimizer of $\widetilde{F}_{k,j}$. Clearly, $\widetilde{F}_{k,j}(\overline{x}^{k,j})<\widetilde{F}_{k,j}(x^k)$ (if not, $x^k$ is a global minimizer of $\widetilde{F}_{k,j}$ with $0\in\partial\widetilde{F}_{k,j}(x^k)=\partial F(x^k)$, and Algorithm \ref{cubic-reg} stops at $x^k$). From the continuity of $F$ relative to ${\rm dom}\,g$ by Assumption \ref{ass-F}, $x^{k,j}$ in step (2a) can take any stationary point $y$ sufficiently close to $\overline{x}^{k,j}$. Thus, we conclude that Algorithm \ref{cubic-reg} is well defined. 
  	
 \noindent
 {\bf(d)} Since the Hessian $\nabla^2\!f$ is not assumed to be globally Lipschitz on a closed convex set containing all iterates as in the reference (see, e.g., \cite{Yue19,Song19,Zhou18}), and even when it is strictly continuous at $x^k$, its Lipschitz modulus at $x^k$ is usually unknown, now step (2a) aims to search a desirable estimation $L_{k}$ for it. 
\end{remark}
\begin{lemma}\label{ls-welldef}
 Under Assumption \ref{ass-F}, if Algorithm \ref{cubic-reg} does not terminate at the $k$th iterate, then its inner loop must stop within a finite number of steps. 
\end{lemma}
 \begin{proof} Assume that ${\rm dist}(0,\partial F(x^k))>\epsilon$. Suppose on the contrary that its inner loop does not stop within a finite number of steps, i.e., for each $j\in\mathbb{N}$, 
 \begin{equation}\label{aim-ineq1}
  F(x^{k,j})-F(x^k)>-(\delta/q)L_{k,j}\|x^{k,j}-x^k\|^q,
 \end{equation} 
 From step (2a) and the expression of $\widetilde{F}_{k,j}$, it follows that for each $j\in\mathbb{N}$, 
 \begin{equation}\label{wd-ineq1}
 f_k(x^{k,j})+g(x^{k,j})+({L_{k,j}}/{q})\|x^{k,j}-x^{k}\|^q\le F(x^{k}).
 \end{equation}
 Since $L_{k,j}\!\to\infty$ as $j\!\to\infty$, $\langle x^{k,j}\!-\!x^k,\nabla^2\!f(x^k)(x^{k,j}\!-\!x^k)\rangle+\frac{L_{k,j}}{q}\|x^{k,j}\!-\!x^{k}\|^q\to \infty$ as $j\to\infty$. Along with the expression of $f_k$ and the lower boundedness of $g$ by
 Assumption \ref{ass-F} (ii), the last inequality implies that $x^{k,j}\to x^k$ as $j\to\infty$.
 Note that $\{x^{k,j}\}_{j\in\mathbb{N}}\subset{\rm dom}\,g$. By Assumption \ref{ass-F} (ii),  $\lim_{j\to\infty}g(x^{k,j})= g(x^k)$. Next we claim that $\liminf_{j\to\infty}L_{k,j}\|x^{k,j}-x^k\|^{q-1}>0$. If not, there is an index set $J\subset\mathbb{N}$ such that $\lim_{J\ni j\to\infty}L_{k,j}\|x^{k,j}-x^k\|^{q-1}=0$. By the definition of $x^{k,j}$, $ 0\in\nabla\!f(x^{k})+\nabla^2\!f(x^{k})(x^{k,j}\!-\!x^k)+L_{k,j}\|x^{k,j}\!-\!x^k\|^{q-2}(x^{k,j}\!-\!x^k)+\partial g(x^{k,j})$, which along with $\partial F(x)=\nabla f(x)+\partial g(x)$ for $x\in{\rm dom}\,g$ is equivalent to saying that 
 \[
  \nabla\!f(x^{k,j})-\nabla\!f(x^{k})-\nabla^2\!f(x^{k})(x^{k,j}-x^k)
  -L_{k,j}\|x^{k,j}-x^k\|^{q-2}(x^{k,j}-x^k)\!\in\partial F(x^{k,j}).
 \]
 Because $\nabla\!f$ is continuous on ${\rm dom}\,g$ by Assumption \ref{ass-F} (i),  passing the limit $J\ni j\to\infty$ and using $x^{k,j}\!\to x^k$ as $j\to\!\infty$ and the outer semicontinuity of $\partial F$ yields that $0\in\partial F(x^k)$, which is impossible because ${\rm dist}(0,\partial F(x^k))>\epsilon$. Thus,  $\liminf_{j\to\infty}L_{k,j}\|x^{k,j}\!-x^k\|^{q-1}>0$, so there exists $\gamma>0$ such that  $L_{k,j}\|x^{k,j}\!-\!x^k\|^{q-1}\ge \gamma$ for all sufficiently large $j$, and then
 \begin{equation}\label{temp-ineq41}
  (1-\delta)\frac{L_{k,j}}{q}\|x^{k,j}-x^k\|^q
  \ge\frac{1-\delta}{q}\gamma\|x^{k,j}-x^k\|\ge o(\|x^{k,j}-x^k\|^2).
 \end{equation}
 From the twice differentiability of $f$ on ${\rm dom}\,g$, for all sufficiently large $j\in\mathbb{N}$, 
 \begin{align*}
  F(x^{k,j})-F(x^k)&=f_k(x^{k,j})+g(x^{k,j})-g(x^k)+o(\|x^{k,i}-x^k\|^2)\\
   &\le -({L_{k,j}}/{q})\|x^{k,j}-x^k\|^q+o(\|x^{k,j}-x^k\|^2)\\
   &\le -\delta({L_{k,j}}/{q})\|x^{k,j}-x^k\|^q,
 \end{align*}
 where the first inequality is using \eqref{wd-ineq1}, and the last one is due to \eqref{temp-ineq41}. The last inequality is a contradiction to \eqref{aim-ineq1}. The conclusion then follows. 
 \end{proof}
 
 For the sequence generated by Algorithm \ref{cubic-reg}, the following conclusion holds. 
 \begin{lemma}\label{pro-alg1}
  Let $\{x^k\}_{k\in\mathbb{N}}$ be the sequence generated by Algorithm \ref{cubic-reg} with $\epsilon=0$. Suppose that Assumption \ref{ass-F} holds and that $\mathcal{L}_{F(x^0)}:=\{x\in\mathbb{X}\,|\,F(x)\le F(x^0)\}$ is bounded. Then the following statements hold. 
 \begin{description}
  \item [(i)] For each $k\in\mathbb{N}$, $F(x^{k+1})\le F(x^k)-\delta(L_{\rm min}/q)\|x^{k+1}-x^k\|^q$. 
		
  \item[(ii)] $\{F(x^{k})\}_{k\in\mathbb{N}}$ is nonincreasing and convergent, and ${\displaystyle\lim_{k\to\infty}}\|x^{k}-x^{k-1}\|=0$.
		
  \item [(iii)] The sequence $\{x^k\}_{k\in\mathbb{N}}$ is bounded and its accumulation point set, denoted by $\omega(x^0)$, is nonempty and compact. 
  
  \item[(iv)] If in addition $\nabla^2\!f$ is strictly continuous on an open neighborhood $\mathcal{N}$ of $\omega(x^0)$, the sequence $\{L_k\}_{k\in\mathbb{N}}$ is bounded and for each $k\in\mathbb{N}$ there exists $w^{k+1}\in\partial F(x^{k+1})$ such that $\|w^{k+1}\|\le\alpha\|x^{k+1}\!-x^{k}\|^{q-1}$ for some $\alpha>0$. 
  
  \item[(v)] There exists a subsequence $\{x^{k_j}\}_{j\in\mathbb{N}}$ with 
  $x^{k_j}\!\to\widetilde{x}\in\omega(x^0)$ and $F(x^{k_j})\to F(\widetilde{x}) $ as $j\to\infty$. 
 \end{description}
 \end{lemma}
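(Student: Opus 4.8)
Parts (i)--(iii) are immediate. When the inner loop exits at index $j_k$, the acceptance test in Step (2b) gives $F(x^{k,j_k})\le F(x^k)-(\delta/q)L_{k,j_k}\|x^{k,j_k}-x^k\|^q$; since $x^{k+1}=x^{k,j_k}$, $L_k=L_{k,j_k}$, and $L_{k,j}\in\{L_{k,0}\}\cup\{\tau^jL_{k,0}\}_{j\ge1}$ is always $\ge L_{k,0}\ge L_{\min}$, this yields $F(x^{k+1})\le F(x^k)-\frac{\delta L_{\min}}{q}\|x^{k+1}-x^k\|^q$, i.e.\ (i). Then $\{F(x^k)\}$ is nonincreasing, and being lower bounded it converges; summing (i) over $k$ gives $\sum_{k}\|x^{k+1}-x^k\|^q<\infty$, hence $\|x^k-x^{k-1}\|\to0$, which is (ii). Part (iii) follows because (i) forces $\{x^k\}\subset\mathcal{L}_{F(x^0)}$, which is bounded, so $\omega(x^0)$ is nonempty; being the accumulation point set of a bounded sequence it is closed, hence compact, and ${\rm dist}(x^k,\omega(x^0))\to0$.

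\textbf{Plan for the boundedness of $\{L_k\}$ in (iv).} Fix $\rho>0$ with $\Omega:=\{x\in\mathbb{X}\,|\,{\rm dist}(x,\omega(x^0))\le2\rho\}\subset\mathcal{N}$ (possible since $\omega(x^0)$ is compact and $\mathcal{N}$ open), so that $\Omega$ is compact and $\nabla^2\!f$ is Lipschitz on $\Omega$ with some modulus $L_\nabla$. First I would establish an auxiliary claim: for every $r>0$ there is $\overline{L}_r>0$ such that $L_{k,j}\ge\overline{L}_r$ implies $\|x^{k,j}-x^k\|\le r$. Indeed, \eqref{wd-ineq1} gives $\frac{L_{k,j}}{q}\|x^{k,j}-x^k\|^q\le g(x^k)-g(x^{k,j})-\ell_k(x^{k,j})\le C_1+C_2\|x^{k,j}-x^k\|+C_3\|x^{k,j}-x^k\|^2$, where $C_1,C_2,C_3$ bound $g(x^k)-\inf g$, $\|\nabla\!f(x^k)\|$, $\tfrac12\|\nabla^2\!f(x^k)\|$ uniformly over the bounded sequence $\{x^k\}$; since $q\ge2$, driving $L_{k,j}$ large forces $\|x^{k,j}-x^k\|\to0$ at a rate independent of $k,j$. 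Next, combining \eqref{wd-ineq1} with the cubic estimate $f(y)-f(x^k)-\ell_k(y)\le\frac{L_\nabla}{6}\|y-x^k\|^3$ valid on $\Omega$ yields, whenever $x^k,x^{k,j}\in\Omega$,
\[
 F(x^{k,j})-F(x^k)\le\tfrac{L_\nabla}{6}\|x^{k,j}-x^k\|^3-\tfrac{L_{k,j}}{q}\|x^{k,j}-x^k\|^q,
\]
so the test of Step (2b) holds as soon as $\frac{L_\nabla}{6}\|x^{k,j}-x^k\|^{3-q}\le\frac{(1-\delta)L_{k,j}}{q}$; as $3-q\ge0$ and $\|x^{k,j}-x^k\|$ is bounded on $\Omega$, it suffices that $L_{k,j}\ge\overline{L}:=\max\{\overline{L}_\rho,\,\frac{qL_\nabla\max\{1,\rho^{3-q}\}}{6(1-\delta)}\}$. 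For $k$ large enough that ${\rm dist}(x^k,\omega(x^0))\le\rho$, the claim guarantees $\|x^{k,j}-x^k\|\le\rho$ once $L_{k,j}\ge\overline{L}$, hence both $x^k,x^{k,j}$ (and the segment between them) lie in $\Omega$; thus the inner loop exits with $L_{k}\le\max\{L_{\max},\tau\overline{L}\}$. The remaining finitely many $k$ give finite $L_k$ by Lemma \ref{ls-welldef}, so $\widehat{L}:=\sup_k L_k<\infty$.

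\textbf{Plan for the subgradient estimate in (iv).} From \eqref{inexact1} with $j=j_k$ there is $\widetilde w^{k+1}\in\partial\widetilde F_{k,j_k}(x^{k+1})$ with $\|\widetilde w^{k+1}\|\le bL_k\|x^{k+1}-x^k\|^{q-1}$. Since $\widetilde F_{k,j_k}=\ell_k+q^{-1}L_k\|\cdot-x^k\|^q+g$ with the first two summands $C^1$, the limiting subdifferential sum rule gives
\[
 w^{k+1}:=\widetilde w^{k+1}+\nabla\!f(x^{k+1})-\nabla\!f(x^k)-\nabla^2\!f(x^k)(x^{k+1}-x^k)-L_k\|x^{k+1}-x^k\|^{q-2}(x^{k+1}-x^k)\in\partial F(x^{k+1}).
\]
For $k$ large, ${\rm dist}(x^{k+1},\omega(x^0))\le{\rm dist}(x^k,\omega(x^0))+\|x^{k+1}-x^k\|\le2\rho$ by part (ii), so $x^k,x^{k+1}\in\Omega$; the Lipschitz Hessian bound then gives $\|\nabla\!f(x^{k+1})-\nabla\!f(x^k)-\nabla^2\!f(x^k)(x^{k+1}-x^k)\|\le\frac{L_\nabla}{2}\|x^{k+1}-x^k\|^2\le\frac{L_\nabla}{2}\|x^{k+1}-x^k\|^{q-1}$ (using $\|x^{k+1}-x^k\|\le1$ eventually and $q-1\le2$), and together with $L_k\le\widehat{L}$ and $\|\widetilde w^{k+1}\|\le b\widehat{L}\|x^{k+1}-x^k\|^{q-1}$ we obtain $\|w^{k+1}\|\le\alpha\|x^{k+1}-x^k\|^{q-1}$ with $\alpha=(b+1)\widehat{L}+L_\nabla/2$. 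The finitely many initial indices are absorbed by enlarging $\alpha$ (each yields a finite ratio, and when $x^{k+1}=x^k$ one has $\widetilde w^{k+1}=0$ and all correction terms vanish, so the bound is trivial).

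\textbf{Main obstacle.} The delicate point is (iv): unlike Lemma \ref{ls-welldef}, which only needs finiteness of $L_k$ for each fixed $k$, here we need a bound on $L_k$ that is uniform in $k$, and this forces the iterates --- crucially including the intermediate trial points $x^{k,j}$ produced in the inner loop --- to remain in a compact set where $\nabla^2\!f$ is Lipschitz. This is exactly why the auxiliary claim ``$L_{k,j}$ large $\Rightarrow x^{k,j}$ close to $x^k$'' and the step-to-step decay $\|x^{k+1}-x^k\|\to0$ from part (ii) are both needed, with Lemma \ref{ls-welldef} mopping up the finitely many early iterations before the sequence enters a neighborhood of $\omega(x^0)$.
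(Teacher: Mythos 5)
Your proof is correct, and for part (iv) it takes a genuinely different route from the paper. Parts (i)--(iii) coincide with the paper's argument (the paper invokes Lemma \ref{lemma1-Phi} with $\Phi=F$ for (iii), which is what your boundedness-plus-closedness reasoning amounts to). For (iv), the paper proceeds twice by contradiction: it assumes $L_k\to\infty$ along a subsequence with $j_k\ge1$, examines the last rejected trial point $x^{k,j_k-1}$ via \eqref{Lbounded-ineq1}--\eqref{Lbounded-ineq2}, and uses the mean value theorem together with strict continuity of $\nabla^2\!f$ at a single accumulation point to reach a contradiction with $q\le 3$; it then bounds the Taylor-error ratio in \eqref{aim-ineq3} by a second limsup/contradiction argument along subsequences. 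You instead build a compact tube $\Omega$ around $\omega(x^0)$ inside $\mathcal{N}$, extract a uniform Lipschitz modulus $L_\nabla$ of $\nabla^2\!f$ there, and argue directly: the model-decrease condition \eqref{inexact2} (your use of \eqref{wd-ineq1}) forces $\|x^{k,j}-x^k\|$ small once $L_{k,j}$ is large, and the cubic Taylor bound then shows the acceptance test of Step (2b) must pass once $L_{k,j}$ exceeds an explicit threshold $\overline{L}$, so $L_k\le\max\{L_{\max},\tau\overline{L}\}$ for all large $k$, with Lemma \ref{ls-welldef} covering the finitely many early iterations; the subgradient bound then follows from the same $L_\nabla$ with the explicit constant $\alpha=(b+1)\widehat{L}+L_\nabla/2$, exactly the element $w^{k+1}$ the paper constructs in \eqref{aim-ineq2}. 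What your route buys is transparency and quantitative constants (an explicit cap on $L_k$ and an explicit $\alpha$), in the spirit of standard sufficient-decrease analyses for $q$-regularized Newton steps; what the paper's route buys is that it never needs to assemble the compact neighborhood and uniform moduli explicitly, working only pointwise at accumulation points along subsequences. Two small points you should make explicit if you write this up: the uniform constants $C_1,C_2,C_3$ require $f$, $\nabla\!f$, $\nabla^2\!f$ to be bounded on the closure of $\{x^k\}$ (which holds since $\omega(x^0)\subset\mathcal{N}$ and the iterates lie in ${\rm dom}g$ --- the paper's own \eqref{Lbounded-ineq2} uses the same bounds), and since your tube $\Omega$ need not be convex, passing from strict continuity on $\mathcal{N}$ to a single Lipschitz modulus on $\Omega$ needs the routine covering/boundedness argument, while the segment-wise Taylor estimates themselves only use points of $\Omega$, which you did verify lie there.
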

 \begin{proof}
 Part (i) is trivial by the inner loop of Algorithm \ref{cubic-reg} and $L_{k,j}\ge L_{\rm min}$ for each $k\in\mathbb{N}$ and $j\in\mathbb{N}$. From part (i), the sequence $\{F(x^k)\}_{k\in\mathbb{N}}$ is nonincreasing, which implies that $\{x^k\}_{k\in\mathbb{N}}\subset \mathcal{L}_{F(x^0)}$. Together with the boundedness of $\mathcal{L}_{F(x^0)}$, part (iii) then follows. Since $\{x^k\}_{k\in\mathbb{N}}$ is bounded, the sequence $\{f(x^k)\}_{k\in\mathbb{N}}$ has a lower bound by Assumption \ref{ass-F} (i), which by the lower boundedness of $g$ in Assumption \ref{ass-F} (ii) means that $\{F(x^k)\}_{k\in\mathbb{N}}$ has a lower bound. Thus, $\{F(x^k)\}_{k\in\mathbb{N}}$ is convergent and $\lim_{k\to\infty}\|x^{k}-x^{k-1}\|=0$. 
 
 \noindent
 {\bf(iv)} Suppose on the contradiction that the sequence $\{L_k\}_{k\in\mathbb{N}}$ is unbounded. Then, there necessarily exists an index set $K\!:=\!\{k\in\mathbb{N}\ |\ j_k\ge 1\}$ such that $\lim_{K\ni k\to\infty}L_{k}=\infty$.  For each $k\in K$, write $\widetilde{L}_{k}:=L_k/\tau$. From Lemma \ref{ls-welldef} and the inner loop of Algorithm \ref{cubic-reg}, for each $k\in K$, 
 \begin{equation}\label{Lbounded-ineq1}
  F(x^{k,j_k-1})>F(x^{k})-(\delta/q)\widetilde{L}_{k}\|x^{k,j_k-1}-x^{k}\|^q.
 \end{equation}
 On the other hand, from the definition of $x^{k,j_k-1}$, for each $k\in K$ it holds that 
  \begin{equation}\label{Lbounded-ineq2}
  f_k(x^{k,j_k-1})+g(x^{k,j_k-1})+(\widetilde{L}_{k}/q)\|x^{k,j_k-1}\!-\!x^{k}\|^q-F(x^{k})\le 0
  \end{equation}
  which, along with the nonincreasing of $\{F(x^k)\}_{k\in\mathbb{N}}$ by part (i), implies that 
  \begin{align*}
  (\widetilde{L}_{k}/q)\|x^{k,j_k-1}\!-\!x^{k}\|^q
  &\le F(x^0)-F(x^{k,j_k-1})+\|\nabla\!f(x^k)\|\|x^{k,j_k-1}\!-x^{k}\|\nonumber\\
  &\quad +\|\nabla^2\!f(x^k)\|\|x^{k,j_k-1}\!-\!x^{k}\|^2.
  \end{align*}
  This, by the lower boundedness of $\{F(x^{k,j_k-1})\}$ due to Assumption \ref{ass-F} and $\lim_{K\ni k\to\infty}\widetilde{L}_{k}=\infty$, implies that $\lim_{K\ni k\to\infty}\|x^{k,j_k-1}\!-\!x^{k}\|=0$. If necessary by taking a subsequence, we assume that $\lim_{K\ni k\to\infty}x^{k}=x^*\in\omega(x^0)$. Along with $\lim_{K\ni k\to\infty}\|x^{k,j_k-1}\!-\!x^{k}\|=0$, $\lim_{K\ni k\to\infty}x^{k,j_k-1}=x^*$. Now from Assumption \ref{ass-F} (i) and the mean-valued theorem, for each $k\in K$, there exists $t_{k}\in(0,1)$ such that for $y^{k}:=x^{k}+t_{k}(x^{k,j_k-1}-x^{k})$,
  \begin{align*}
   f(x^{k,j_k-1})-f(x^{k})&=\langle\nabla\!f(x^{k}),x^{k,j_k-1}-x^{k} \rangle \\
	&\quad +\langle\nabla^2\!f(y^{k})(x^{k,j_k-1}-x^{k}),x^{k,j_k-1}-x^{k} \rangle. 
  \end{align*}
  This, along with inequality \eqref{Lbounded-ineq2}, implies that for each $k\in K$, 
  \begin{align}\label{Lbounded-ineq3}
  &({\widetilde{L}_{k}}/{q})\|x^{k,j_k-1}\!-\!x^{k}\|^q
		\le -f_k(x^{k,j_k-1})-g(x^{k,j_k-1})+F(x^{k})\nonumber\\
  &=f(x^{k,j_k-1})-f(x^{k})-f_k(x^{k,j_k-1})+f(x^k)+F(x^k)-F(x^{k,j_k-1})\nonumber\\	
  &=\langle(\nabla^2\!f(y^{k})\!-\!\nabla^2\!f(x^{k}))(x^{k,j_k-1}\!-x^{k}),
		x^{k,j_k-1}\!-x^{k}\rangle+F(x^k)\!-\!F(x^{k,j_k-1})\nonumber\\
  &<\|\nabla^2\!f(y^{k})-\nabla^2\! f(x^{k})\|
		\|x^{k,j_k-1}-x^{k}\|^2+\delta(\widetilde{L}_{k}/q)\|x^{k,j_k-1}\!-\!x^{k}\|^q
  \end{align}
  where the last inequality is due to \eqref{Lbounded-ineq1}. Note that $\lim_{K\ni k\to\infty}y^{k}=x^*$. From the strict continuity of $\nabla^2\!f$ at $x^*$, there exists $L>0$ such that for all sufficiently large $k\in K$, 
  \[
	\|\nabla^2\!f(y^{k})-\nabla^2\!f(x^{k})\|\le L\|x^{k,j_k-1}-x^{k}\|,
  \] 
  which together with \eqref{Lbounded-ineq3} implies that for all sufficiently large $k\in K$, 
  \[
	(1-\delta)(\widetilde{L}_{k}/q)\|x^{k,j_k-1}\!-\!x^{k}\|^q
	\le L\|x^{k,j_k-1}\!-\!x^{k}\|^3.
  \]
  This is impossible by recalling that $\lim_{K\ni k\to\infty}\widetilde{L}_{k}=\infty, x^{k,j_k-1}\neq x^{k}$ for each $k\in K$ by \eqref{Lbounded-ineq1}, and $q\in[2,3]$. Consequently, the sequence $\{L_{k}\}_{k\in\mathbb{N}}$ is bounded. 
	
  We next prove the rest conclusions. For each $k$, by the definition of $x^{k+1}$, 
  \[
	0\in\nabla\!f(x^{k})+\nabla^2\!f(x^{k})(x^{k+1}\!-\!x^k)
	+L_{k}\|x^{k+1}\!-\!x^k\|^{q-2}(x^{k+1}\!-\!x^k)+\partial g(x^{k+1}), 
  \]
  which along with the expression of $F$ implies that for each $k\in\mathbb{N}$,
  \begin{align}\label{aim-ineq2}
   \partial F(x^{k+1})\ni w^{k+1}&\!:=\nabla\!f(x^{k+1})\!-\!\nabla\!f(x^{k})\!-\!\nabla^2\!f(x^{k})(x^{k+1}-\!x^k)\nonumber\\
   &\qquad-\!L_k\|x^{k+1}-x^k\|^{q-2}(x^{k+1}\!-\!x^k). 
	\end{align}
   We claim that there is $\overline{L}>0$ such that for all $k\in K_1:=\{k\in\mathbb{N}\ |\ x^{k+1}\ne x^k\}$,
  \begin{equation}\label{aim-ineq3}
   \limsup_{K_1\in k\to\infty}\frac{\|\nabla\!f(x^{k+1})\!-\!\nabla\!f(x^{k})\!-\!\nabla^2\!f(x^{k})(x^{k+1}-\!x^k)\|}{\|x^{k+1}\!-\!x^k\|^{q-1}}\le \overline{L}.
  \end{equation}
  If not, there will exist an index set $K_2\subseteq K_1$ such that 
  \begin{equation}\label{temp-ineq}
  \lim_{K_2\ni k\to\infty}\frac{\|\nabla\!f(x^{k+1})\!-\!\nabla\!f(x^{k})\!-\!\nabla^2\!f(x^{k})(x^{k+1}-\!x^k)\|}{\|x^{k+1}\!-\!x^k\|^{q-1}}=\infty.
  \end{equation}
  On the other hand, by Assumption \ref{ass-F} (i) and the mean-valued theorem, for each $k\in\mathbb{N}$ there exists $t_k\in(0,1)$ such that
  \begin{equation*}
   \nabla\!f(x^{k+1})\!-\!\nabla\!f(x^{k})
	=\nabla^2\!f(x^{k}\!+\!t_k(x^{k+1}\!-\!x^k))(x^{k+1}\!-\!x^k).
   \end{equation*} 
  We assume that $\lim_{K_2\ni k\to\infty}x^k=\widehat{x}^*$ (if necessary by  taking a subsequence of $\{x^k\}_{k\in K_2}$). Clearly, $\widehat{x}^*\in\omega(x^0)$. Moreover, by part (ii), $\lim_{K_2\ni k\to\infty}x^{k}+t_k(x^{k+1}\!-\!x^k)=\widehat{x}^*$. Since $\nabla^2\!f$ is strictly continuous at $\widehat{x}^*$, there exists $\widehat{L}>0$ such that for all $k\in K_2$ large enough, 
  \[
	\|\nabla^2\!f(x^{k}+t_k(x^{k+1}\!-\!x^k))-\nabla^2\!f(x^{k})\|
	\le\widehat{L}\|x^{k+1}-x^k\|.
   \]
  Combining the last two equations with $\lim_{k\to\infty}\|x^{k+1}-x^k\|=0$ and using $q\in[2,3]$ yields that
   \[
	\lim_{K_2\ni k\to\infty}\frac{\|\nabla\!f(x^{k+1})\!-\!\nabla\!f(x^{k})\!-\!\nabla^2\!f(x^{k})(x^{k+1}-\!x^k)\|}{\|x^{k+1}\!-\!x^k\|^{q-1}}
	\le\widehat{L},
  \]
  which is a contradiction to \eqref{temp-ineq}. Consequently, the claimed inequality \eqref{aim-ineq3} holds, and there exists a constant $\widetilde{L}>0$ such that for all $k\in K_1$
  \[
	|\nabla\!f(x^{k+1})\!-\!\nabla\!f(x^{k})\!-\!\nabla^2\!f(x^{k})(x^{k+1}-\!x^k)\|
	\le \widetilde{L}\|x^{k+1}\!-\!x^k\|^{q-1}.
  \]
  Along with \eqref{aim-ineq2} and the definition of $K_1$, $w^{k+1}\!\in\partial F(x^{k+1})$ for each $k\in\mathbb{N}$ with $\|w^{k+1}\|\le(L_k+\widetilde{L})\|x^{k+1}-x^{k}\|^{q-1}$. 
  Recall that the sequence $\{L_k\}_{k\in\mathbb{N}}$ is bounded, so $\|w^{k+1}\|\le\alpha\|x^{k+1}-x^{k}\|^{q-1}$ with some $\alpha>0$ for all $k\in\mathbb{N}$. 
  
  \noindent
  {\bf(v)} The result follows by the continuity of $g$ relative to its domain. 
 \end{proof}
 
 Lemma \ref{pro-alg1} (ii) and (iv)-(v) show that the sequence generated by Algorithm \ref{cubic-reg} satisfies conditions H1-H3 with $\Phi=F$ and $p=q-1$. Thus, by invoking Theorems \ref{KL-converge}-\ref{KL-rate}, we immediately have the following conclusion. 
\begin{theorem}\label{theorem-Alg1}
  Let $\{x^k\}$ be the sequence generated by Algorithm \ref{cubic-reg} with $\epsilon=0$. Suppose that Assumption \ref{ass-F} holds and that the set $\mathcal{L}_{F(x^0)}$ is bounded. Then,
 \begin{description}
 \item [(i)] when $F$ is a KL function, $\sum_{k=0}^{\infty}\|x^k\!-\!x^{k-1}\|<\infty$; 
		
 \item [(ii)] when $F$ is a KL function of exponent $\theta\in\!(0,\frac{q-1}{q})$,    $\{x^k\}_{k\in\mathbb{N}}$ converges Q-superlinearly with order $\frac{q-1}{\theta q}$ to a  stationary point of \eqref{composite};
		
 \item [(iii)] when $F$ is a KL function of exponent $\theta\in[\frac{q-1}{q},1)$, there exist $\gamma>0$ and $\varrho\in[0,1)$ such that 
  \begin{equation}
  \|x^k-\widetilde{x}\|\le\left\{\begin{array}{cl}
   \gamma\varrho^{k} &{\rm if}\ \theta=\frac{q-1}{q},\\
   \gamma{k}^{\frac{1-\theta}{1-(q\theta)/(q-1)}}&{\rm if}\ \theta\in(\frac{q-1}{q},1).
			\end{array}\right.
 \end{equation}
 \end{description}
\end{theorem}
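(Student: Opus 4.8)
The plan is to read the three conclusions off Theorems~\ref{KL-converge} and~\ref{KL-rate} applied to $\Phi=F$ with $p=q-1$; the bulk of the work has already been done in Lemma~\ref{ls-welldef} (well-definedness) and Lemma~\ref{pro-alg1} (the decrease and relative-error estimates), so what remains is to check that all hypotheses of those two abstract theorems are in force and to match the exponents. First I would verify conditions H1--H2. Lemma~\ref{pro-alg1}\,(i) gives, for every $k\in\mathbb{N}$, $F(x^{k+1})+\frac{\delta L_{\min}}{q}\|x^{k+1}-x^k\|^q\le F(x^k)$, which is precisely H1 with $a=\delta L_{\min}/q>0$ and $p+1=q$. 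Since $\nabla^2\!f$ is assumed strictly continuous on an open neighborhood of $\omega(x^0)$, Lemma~\ref{pro-alg1}\,(iv) supplies for each $k$ a vector $w^{k+1}\in\partial F(x^{k+1})$ with $\|w^{k+1}\|\le\alpha\|x^{k+1}-x^k\|^{q-1}$, which is exactly H2 with $b=\alpha\ge0$ and $p=q-1$.

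Next I would check conditions \eqref{Phi-cond1}--\eqref{Phi-cond2} for $\Phi=F$. Condition \eqref{Phi-cond1} is the assumed boundedness of $\mathcal{L}_{F(x^0)}$. For \eqref{Phi-cond2}, Assumption~\ref{ass-F} forces $F=f+g$ to be continuous relative to ${\rm dom}\,F={\rm dom}\,g$ ($f$ is continuous on an open set containing ${\rm dom}\,g$ and $g$ is continuous relative to ${\rm dom}\,g$), and since $\mathcal{L}_{F(x^0)}$ is closed by lower semicontinuity of $F$ and contained in ${\rm dom}\,g$, every convergent subsequence $x^{k_j}\to\widehat{x}$ has $\widehat{x}\in{\rm dom}\,g$ and $F(x^{k_j})\to F(\widehat{x})$, in particular $\limsup_j F(x^{k_j})\le F(\widehat{x})$ --- the point already recorded in the proof of Lemma~\ref{pro-alg1}\,(iii). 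With H1--H2 and \eqref{Phi-cond1}--\eqref{Phi-cond2} verified, Theorem~\ref{KL-converge} applied to $\Phi=F$ yields $\sum_{k=0}^{\infty}\|x^{k+1}-x^k\|<\infty$, i.e. part (i); in particular $\{x^k\}$ converges to some $\widetilde{x}$.

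Then I would invoke Theorem~\ref{KL-rate} with $p=q-1$. When $F$ is a KL function of exponent $\theta\in(0,\frac{q-1}{q})=(0,\frac{p}{p+1})$, part~(i) of that theorem gives, for every $\varepsilon\in(0,1)$ and all large $k$, $\|x^{k+1}-\widetilde{x}\|\le\varepsilon\|x^k-\widetilde{x}\|^{p/(\theta(1+p))}$, and the identity $\frac{p}{\theta(1+p)}=\frac{q-1}{\theta q}$ turns this into the Q-superlinear rate of order $\frac{q-1}{\theta q}$ asserted in part~(ii). For $\theta\in[\frac{q-1}{q},1)=[\frac{p}{p+1},1)$, part~(ii) of that theorem gives $\|x^k-\widetilde{x}\|\le\Delta_k\le\gamma\varrho^k$ when $\theta=\frac{p}{p+1}$ and $\|x^k-\widetilde{x}\|\le\Delta_k\le\gamma k^{(1-\theta)/(1-(1+1/p)\theta)}$ when $\theta\in(\frac{p}{p+1},1)$; since $1+\frac1p=\frac{q}{q-1}$, the exponent equals $\frac{1-\theta}{1-q\theta/(q-1)}$, giving part~(iii) (the relaxation of $\varrho\in(0,1)$ to $\varrho\in[0,1)$ being immaterial). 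Finally, to identify $\widetilde{x}$ as a stationary point of \eqref{composite}: from \eqref{Phi-cond2} and lower semicontinuity of $F$ one gets $F(x^k)\to F(\widetilde{x})$, so $x^k\to\widetilde{x}$ with $F(x^k)\to F(\widetilde{x})$ and $w^k\in\partial F(x^k)$, $\|w^k\|\to0$; the definition of the limiting subdifferential then forces $0\in\partial F(\widetilde{x})$.

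I do not anticipate a real obstacle: the hard analytic content sits entirely in Lemmas~\ref{ls-welldef}--\ref{pro-alg1} and Theorems~\ref{KL-converge}--\ref{KL-rate}, so the proof is a verification that the abstract framework applies. The only places demanding a little care are the bookkeeping of \eqref{Phi-cond1}--\eqref{Phi-cond2} for $F$ (using Assumption~\ref{ass-F} and closedness of the sublevel set to keep cluster points in ${\rm dom}\,g$) and the elementary algebra translating $\frac{p}{\theta(1+p)}$ and $\frac{1-\theta}{1-(1+1/p)\theta}$, with $p=q-1$, into the stated orders $\frac{q-1}{\theta q}$ and $\frac{1-\theta}{1-q\theta/(q-1)}$.
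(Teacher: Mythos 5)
Your proposal is correct and follows the same route as the paper: the paper proves Theorem \ref{theorem-Alg1} exactly by noting that Lemma \ref{pro-alg1} gives conditions H1--H2 with $p=q-1$ (and $a=\delta L_{\min}/q$, $b=\alpha$), that Assumption \ref{ass-F} and the bounded sublevel set give \eqref{Phi-cond1}--\eqref{Phi-cond2} for $\Phi=F$, and then invoking Theorems \ref{KL-converge}--\ref{KL-rate}. Your extra details (the exponent bookkeeping $\frac{p}{\theta(1+p)}=\frac{q-1}{\theta q}$, $1+\frac{1}{p}=\frac{q}{q-1}$, and the F-attentive outer semicontinuity argument showing $0\in\partial F(\widetilde{x})$) are exactly the routine verifications the paper leaves implicit.
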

\begin{remark}
 Theorem \ref{theorem-Alg1} (ii) implies that the sequence generated by the CR method converges Q-superlinearly with order $4/3$ to a stationary point of \eqref{composite} if $F$ is a KL function of exponent $1/2$ satisfying Assumption \ref{ass-F} and the set $\mathcal{L}_{F(x^0)}$ is bounded. When $g\equiv 0$, although this result is weaker than the Q-quadratic superlinear convergence rate obtained in \cite{Yue19}, the required KL property of exponent $1/2$ is also weaker than the local error bound required by the latter on the second-order stationary point set. 
\end{remark} 

\section{Conclusions}

 We conducted a systematic analysis on the convergence of sequences complying with conditions H1-H3, a more general iterative framework than the one studied in \cite{Attouch13} for nonconvex and nonsmooth KL optimization, and derived its Q-superlinear convergence rate of order $\frac{p}{\theta(1+p)}$ for the KL function $\Phi$ with exponent $\theta\in(0,\frac{p}{p+1})$. When $p=1$, this sharpens the R-linear convergence result in \cite[Theorem 1]{Attouch09} for the KL function $\Phi$ of exponent $\theta\in(0,{1}/{2})$. We provided a $q\in[2,3]$-order regularized proximal Newton method for composite optimization problems with a nonconvex and nonsmooth term, whose iterate sequence was shown to fall into this framework, and then first achieved a Q-superlinear convergence rate of order $4/3$ for a cubic regularization method to this class of composite KL optimiztion of exponent $1/2$. 

\medskip
\noindent
{\large\bf Acknowledgements.}
This work is funded by the National Natural Science Foundation of China under project No. 12371299. 


\end{document}